\newcommand{\C}{{\mathbb C}}
\newcommand{\Z}{{\mathbb Z}}
\newcommand{\R}{{\mathbb R}}
\newcommand{\co}{\colon}
\newtheorem{theorem}{Theorem}[section]
\newtheorem{lemma}[theorem]{Lemma}
\newtheorem{proposition}[theorem]{Proposition}
\theoremstyle{definition}
\newtheorem{definition}[theorem]{Definition}
\newtheorem{example}[theorem]{Example}
\newtheorem{remark}[theorem]{Remark}
\begin{document}
\title[Separating sets, metric tangent cone and applications ]
{Separating sets, metric tangent cone and applications for
  complex algebraic germs}
\author{Lev Birbrair}
\address{Departamento de Matem\'atica, Universidade Federal do Cear\'a
(UFC), Campus do Pici, Bloco 914, Cep. 60455-760. Fortaleza-Ce,
Brasil} \email{birb@ufc.br}
\author{Alexandre Fernandes}
\address{Departamento de Matem\'atica, Universidade Federal do Cear\'a
(UFC), Campus do Pici, Bloco 914, Cep. 60455-760. Fortaleza-Ce,
Brasil} \email{alexandre.fernandes@ufc.br}
\author{Walter D.
  Neumann}
\address{Department of Mathematics, Barnard College,
  Columbia University, New York, NY 10027}
\email{neumann@math.columbia.edu}

\subjclass{} \keywords{bi-Lipschitz, isolated complex singularity}

\begin{abstract}
  An explanation is given for the initially surprising ubiquity of
  separating sets in normal complex surface germs. It is  shown that they
  are quite common in higher dimensions too. The relationship between
  separating sets and the geometry of the metric tangent cone of
  Bernig and Lytchak is described. Moreover, separating sets are used
  to show that the inner Lipschitz type need not be constant in a
  family of normal complex surface germs of constant topology.
\end{abstract}

\maketitle

\section{Introduction}

Given a complex algebraic germ $(X,x_0)$, a choice of generators
$x_1,\dots,x_N$ of its local ring gives an embedding of $(X,x_0)$ into
$(\C^N,0)$. It then carries two induced metric space structures: the
``outer metric'' induced from distance in $\C^N$ and the ``inner
metric'' induced by arc-length of curves on $X$.  In the Lipschitz
category each of these metrics is independent of choice of embedding:
different choices give metrics for which the identity map is a
bi-Lipschitz homeomorphism.  The inner metric, which is given by a
Riemannian metric off the singular set, is the one that interests us
most here. It is determined by the outer metric, so germs that are
distinguished by their inner metrics are distinguished by their outer
ones.

These metric structures have so far seen much more study in real
algebraic geometry than in the complex algebraic world.  In fact,
until fairly recently conventional wisdom was that bi-Lipschitz
geometry would have little to say for normal germs of complex
varieties. For example, it is easy to see that two complex curve germs
with the same number of components are bi-Lipschitz homeomorphic
(inner metric). So for curve germs bi-Lipschitz geometry is equivalent
to topology. The same holds for outer bi-Lipschitz geometry of plane
curves: two germs of complex curves in $\C^2$ are bi-Lipschitz
homeomorphic for the outer metric if and only if they are
topologically equivalent as embedded germs \cite{TP,F}. However, it
has recently become apparent that the bi-Lipschitz geometry of complex
surface germs is quite rich; for example, they rarely have trivial
geometry (in the sense of being bi-Lipschitz homeomorphic to a metric
Euclidean cone). We give here an explanation which shows that the same
holds in higher dimensions too. The particular bi-Lipschitz invariants
we will discuss are ``separating sets''.

Let $(X,x_0)$ be a germ of a $k$-dimensional semialgebraic set. A
\emph{separating set} of $(X,x_0)$ (see Section \ref{sec:separating sets})
is a subgerm $(Y,x_0)\subset (X,x_0)$ of dimension less than $k$ which
locally separates $X$ into two pieces $A$ and $B$ which are ``fat'' at
$x_0$ while $Y$ itself is ``thin'' (i.e., the $k$--dimensional densities at
$x_0$ of $A$ and $B$ are nonzero and the $(k-1)$--dimensional density at
$x_0$ of $Y$ is zero).

There are trivial ways a separating set can occur---for example as the
intersection of the components of a complex germ $(X,x_0)$ which is
the union of two irreducible components of equal dimension. The
intersection of the two components clearly separates $X$ and it is
thin because its real codimension is at least $2$.  Fig.~\ref{fig:1}
illustrates schematically a codimension 1 example of a separating set.
\begin{figure}[ht]
    \centering
\includegraphics[width=.4\hsize]{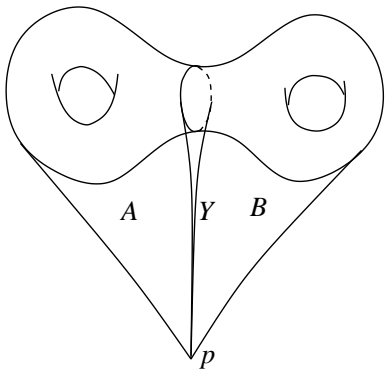}
    \caption{Separating set} 
    \label{fig:1}
\end{figure}
The interesting question is whether such separating sets exist ``in
nature''---for isolated singularities in particular.

For real algebraic singularities examples can be constructed (see
\cite{BF}), but they do not seem to arise very naturally. But for
normal complex surface singularities they had turned out to be surprisingly
common:  already the the simplest singularities, namely the Kleinian
surfaces singularities $A_k=\{(x,y,z)\in\C^3:x^2+y^2+z^{k+1}=0\}$,
have separating sets at the origin when $k>1$ (see \cite{BFN2}). This
paper is devoted to the investigation of this phenomena in all
complex dimensions $\ge 2$. We restrict to isolated complex singularities.

Our first result (Theorem \ref{th:sep}) is that \emph{if $X$ is a
  weighted homogeneous complex surface $\C^3$ with weights $w_1\geq
  w_2>w_3$ and if the zero set $X\cap \{z=0\}$ of the variable $z$ of
  lowest weight has more than one branch at the origin, then $(X,0)$
  has a separating set.}  This reproves that $A_k$ has a separating set
at the origin when $k>1$, and shows more generally that the same holds for
the Brieskorn singularity $$X(p,q,r):=\{(x,y,z)\in
\C^3~|~x^p+y^q+z^r=0\}$$ if $p\le q<r$ and
$\operatorname{gcd}(p,q)>1$.  

It also proves that for $t\ne0$ the singularity $X_t$ from the Brian\c
con-Speder family \cite{BS}
$$X_t=\{(x,y,z)\in\C^3 ~|~x^5+z^{15}+y^7z+txy^6=0\}$$ 
has a separating set at the origin.  On the other hand, we show
(Theorem \ref{bs_1}) that
$X_t$ does not have a separating set when $t=0$. Thus
\emph{the inner bi-Lipschitz type of a normal surface germ is not determined by
  topological type, even in a family of singularities of constant
  topological type} (and is thus also not determined by the resolution graph).

We also show (see Theorem \ref{th:septc}) that \emph{if the tangent
  cone $T_{x_0}X$ of an isolated complex singularity $(X,x_0)$ has a
  non-isolated singularity and the non-isolated locus is a separating
  set of $T_{x_0}X$, then $(X,x_0)$ has a separating set.}  

It follows, for instance (see \cite{BFN_rat}), that all quotient
surface singularities $\C^2/G$ (with $G\subset
\operatorname{GL}(2,\C)$ acting freelly on $\C^2\setminus\{0\}$) have
separating sets except the ones that are obviously conical
($\C^2/\mu_n$ with the group $\mu_n$ of $n$--th roots of unity acting
by multiplication) and possibly, among the simple singularities,
$E_6$, $E_7$, $E_8$ and the $D_n$ series. Moreover, for any $k$ one
can find cyclic quotient singularities with more than $k$ disjoint
non-equivalent separating sets.  Theorem \ref{th:septc} also easily
gives examples of separating sets for isolated singularities in higher
dimension.

It is natural to ask if the converse to \ref{th:septc} holds, i.e.,
separating sets in $(X,x_0)$ always correspond to separating sets in
$T_{x_0}X$, but this is not so: the tangent cone of the Brian\c
con-Speder singularity $X_t$, which has a separating set for $t\ne0$,
is $\C^2$.  But in Theorem \ref{th:nsc} we give necessary and
sufficient conditions for existence of a separating set in terms of
the ``metric tangent cone'' $\mathcal T_{x_0}X$, the theory of which
was recently developed by Bernig and Lytchak \cite{BL}.

$\mathcal T_{x_0}X$ is defined as the Gromov-Hausdorff limit as $t\to 0$
of the result of scaling the inner metric of the germ $(X,x_0)$ by
$\frac 1t$.  Another way of constructing $\mathcal T_{x_0}X$, and the one we
actually use, is as usual tangent cone of a ``normal re-embedding"
\cite{BM} of $X$ (for a complex germ, such a
normal re-embedding may only exists after forgetting complex
structure and considering $(X,x_0)$ as a real germ \cite{BFN_normal}).

\subsection*{Acknowledgements}
The authors acknowledge research support under the grants: CNPq grant
no 300985/93-2 (Lev Birbriar),  CNPq
grant no 300685/2008-4  (Alexandre Fernandes) and  NSF grant no.\
  DMS-0456227 (Walter Neumann).  They are greatful for useful
  correspondence with Bruce Kleiner and Frank Morgan. Neumann also
acknowledges the hospitality of the Max-Planck-Institut
  f\"ur Mathematik in Bonn for part of the work.

\section{Separating sets}\label{sec:separating sets}
Let $X\subset\R^n$ be a $k$-dimensional rectifiable subset. Recall
that the inferior and superior $k$--densities of $X$ at the point
$x_0\in\R^n$ are defined by:
$$\underline\Theta^k(X,x_0) = \lim_{\epsilon\to 0^+}
\inf\frac{\mathcal{H}^k(X\cap \epsilon B(x_0))}{\eta\epsilon^k}$$
and
$$\overline\Theta^k(X,x_0)=\lim_{\epsilon\to 0^+}
\sup\frac{\mathcal{H}^k(X\cap \epsilon B(x_0))}{\eta\epsilon^k}\,,$$
where $\epsilon B(x_0)$ is the $n$--dimensional ball of radius
$\epsilon$ centered at $x_0$, $\eta$ is the volume of the
$k$--dimensional unit ball and $\mathcal H^k$ is
$k$--dimensional Hausdorff measure in $\R^n$. If
$$\underline\Theta^k(X,x_0)=\theta=\overline\Theta^k(X,x_0)\,,$$
then $\Theta$ is called the $k$--dimensional density of $X$
at $x_0$ (or simply $k$--density at $x_o$).
\begin{remark} Recall that if $X\subset\R^n$ is a semialgebraic
  subset, then the above two limits are equal and the $k$--density of
  $X$ is well defined for any point of $\R^n$. Moreover, the vanishing
  or non-vanishing of these densities is a bi-Lipschitz invariant
  invariant, since a bi-Lipschitz homeomorphism clearly changes them
  by a factor that is bounded by $k$ and the Lipschitz
  constant.
\end{remark}
\begin{definition}
  Let $X\subset\R^n$ be a $k$-dimensional semialgebraic set and let
  $x_0\in X$ be a point such that the link of $X$ at $x_0$ is
  connected and the $k$--density of $X$ at $x_0$ is positive. A
  $(k-1)$--dimensional rectifiable subset $Y\subset X$ with
  $x_0\in Y$ is called a \emph{separating set of $X$ at $x_0$} if (see
  Fig.~\ref{fig:1})
\begin{itemize}
\item for some small $\epsilon>0$ the subset
  $\bigl(\epsilon B(x_0)\cap X\bigr)\setminus Y$ has at least two connected
  components $A$ and $B$,
\item the superior $(k-1)$--density of $Y$ at $x_0$ is  zero,
\item the inferior $k$--densities of $A$ and $B$ at $x_0$ are nonzero.
\end{itemize}
More generally, we need only require that the above is true locally,
in the sense that it holds after replacing $X$ by the union of
$\{x_0\}$ and a neighborhood of $Y\setminus \{x_0\}$ in
$X\setminus\{x_0\}$. For simplicity of exposition we will leave to the
reader to check that our results remain correct with this more general
definition.
\end{definition}
\begin{proposition}[Lipschitz invariance of separating sets] Let $X$
  and $Z$ be two real semialgebraic sets. If there exists a
  bi-Lipschitz homeomorphism of germs $F\co (X,x_0)\rightarrow
  (Z,z_0)$ with respect to the inner metric, then $X$ has a separating
  set at $x_0\in X$ if and only if $Z$ has a separating set at $z_0\in
  Z$.
\end{proposition}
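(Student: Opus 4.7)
The plan is to push the separating set forward by $F$: set $Y':=F(Y)$, $A':=F(A)$, $B':=F(B)$, and verify that $Y'$ is a separating set of $Z$ at $z_0$. The reverse implication then follows by the same argument applied to $F^{-1}$, which is also inner bi-Lipschitz. Since bi-Lipschitz maps preserve rectifiability and Hausdorff dimension, $Y'$ is a $(k-1)$-rectifiable subset of $Z$, and the link of $Z$ at $z_0$ is connected (being the bi-Lipschitz image of the link of $X$ at $x_0$).

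For the topological separation condition, the definition uses Euclidean balls $\epsilon B(\cdot)$, but $F$ respects only the inner metric. I bridge this gap with the \L{}ojasiewicz inequality for the semialgebraic germs $X$ and $Z$: the inner distance is bounded above by a fractional power of the outer distance, and the outer distance is trivially bounded above by the inner distance; hence outer and inner ball systems are mutually cofinal neighbourhood bases at $x_0$ and at $z_0$. Now choose $\epsilon'>0$ so small that $F^{-1}\bigl(\epsilon' B(z_0)\cap Z\bigr)$ is contained in the ball $\epsilon B(x_0)\cap X$ appearing in the separation of $X$ by $Y$. Since $F$ is a homeomorphism, its restriction carries the partition $A\sqcup B$ of $(\epsilon B(x_0)\cap X)\setminus Y$ across to the image, and intersecting with $\epsilon'B(z_0)$ exhibits $A'\cap \epsilon' B(z_0)$ and $B'\cap \epsilon' B(z_0)$ as two nonempty unions of connected components of $(\epsilon' B(z_0)\cap Z)\setminus Y'$.

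For the density conditions I invoke the remark preceding the statement. On a $k$-rectifiable set the Hausdorff $k$-measure coincides with its intrinsic analogue (both equal the Riemannian volume on the smooth $k$-dimensional strata, and lower-dimensional pieces contribute nothing), hence transforms by at most a factor $K^{\pm k}$ under an inner $K$-bi-Lipschitz map. Moreover, for a semialgebraic germ the $k$-density at a point is determined by the $(k-1)$-dimensional link at that point and so is computed equivalently with inner or outer balls. Combined with the ball comparison of the previous paragraph, these two facts transfer positivity of $\underline\Theta^k(A,x_0)$ and $\underline\Theta^k(B,x_0)$, and the vanishing of $\overline\Theta^{k-1}(Y,x_0)$, to the corresponding assertions for $A'$, $B'$, and $Y'$ at $z_0$.

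The main obstacle is precisely this interplay between Euclidean balls (appearing in the definition) and inner-metric balls (under which $F$ is well-behaved); once the \L{}ojasiewicz cofinality is in hand and one recognizes that the density conditions are only about \emph{positivity} versus \emph{vanishing}---so invariant under any bounded-factor, bounded-exponent change of scale---the rest of the verification is routine.
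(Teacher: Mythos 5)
Your overall strategy — push the separating set forward by $F$, then check that the defining conditions are preserved — is sound, and your reduction is the same as the paper's: the whole point is to show that the density conditions in the definition, which are stated with \emph{outer} balls, can equally be checked with \emph{inner} balls, so that an inner bi-Lipschitz map carries them over. Where your argument has a genuine gap is in the tool you use to make this switch.

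You invoke only the \L{}ojasiewicz inequality, $d_e \le d_X \le C\, d_e^{\alpha}$ with some exponent $\alpha\in(0,1]$, and conclude that the inner and outer ball systems are ``mutually cofinal.'' Cofinality is indeed enough for the purely topological part (containment of one ball in another so that $F$ carries the separation across). But it is \emph{not} enough for densities, because a density divides by $\epsilon^{k}$, and a reparametrization $\epsilon\mapsto \epsilon^{\alpha}$ with $\alpha<1$ does not respect this normalization: if $\mathcal H^{k-1}(Y\cap\epsilon B)/\epsilon^{k-1}\to 0$, then $\mathcal H^{k-1}(Y\cap\epsilon^{\alpha}B)/\epsilon^{k-1}$ is of order $o(\epsilon^{(\alpha-1)(k-1)})$, which need not tend to $0$ and can even diverge. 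So your closing claim that positivity/vanishing of densities is ``invariant under any bounded-factor, bounded-exponent change of scale'' is false, and the assertion that ``the $k$-density at a point is determined by the $(k-1)$-dimensional link and so is computed equivalently with inner or outer balls'' is left unjustified — the inner and outer spheres cut out different links, and a power-law relation between the radii would distort the normalization.

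What is actually needed, and what the paper supplies, is a \emph{linear} (Lipschitz) comparison, not a H\"older one. The paper isolates this as a separate proposition and proves it via Kurdyka's Pancake Theorem: $X$ decomposes into finitely many semialgebraic pieces on each of which the inner and outer metrics are bi-Lipschitz equivalent with uniform constants; summing over the finitely many pieces gives two positive constants $\kappa_1,\kappa_2$ with
$\kappa_1\underline\Theta^k(X,W,x_0)\le \underline\Theta^k(W,x_0)\le\kappa_2\underline\Theta^k(X,W,x_0)$
and the same for superior densities. Note in particular that for a semialgebraic germ the distance $d_X(x,x_0)$ to the \emph{fixed} base point is Lipschitz-comparable to $|x-x_0|$ (this again comes out of the pancake decomposition), even though $d_X$ and $d_e$ are in general only H\"older-comparable on arbitrary pairs of points; your argument conflates these two statements. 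Replacing the \L{}ojasiewicz step by the Pancake Theorem (or by the Lipschitz comparison of $d_X(\cdot,x_0)$ with $d_e(\cdot,x_0)$ together with your observation that inner and outer Hausdorff measures coincide on rectifiable subsets) closes the gap and recovers the paper's proof.
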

\begin{proof} The result would be immediate if separating sets were
  defined in terms of the inner metrics on $X$ and $Z$. So we must
  show that separating sets can be defined this way.

  Let $X\subset\R^n$ be a connected semialgebraic subset. Consider the
  set $X$ equipped with the inner metric and with the Hausdorff
  measure $\mathcal{H}_X^k$ associated to this metric. Let $Y\subset
  X$ be a $k$-dimensional rectifiable subset. We define the inner
  inferior and superior densities of $Y$ at $x_0\in X$ with respect to inner
  metric on $X$ as follows:
$$\underline\Theta^k(X,Y,x_0)=\lim_{\epsilon\to 0^+}
\inf\frac{\mathcal{H}_X^k(Y\cap \epsilon B_X(x_0))}{\eta\epsilon^k}$$
and
$$\overline\Theta^k(X,Y,x_0)=\lim_{\epsilon\to
  0^+}\sup\frac{\mathcal{H}_X^k(Y\cap
  \epsilon B_X(x_0))}{\eta\epsilon^k}\,,$$
where $\epsilon B_X(x_0)$
denotes the closed ball in $X$ (with respect to the inner metric) of
radius $\epsilon$ centered at $x_0$.
The fact that separating sets can be defined using the inner metric
now follows from the following proposition, completing the
proof.\end{proof}
\begin{proposition}\label{proposition1} Let $X\subset\R^n$ be a
  semialgebraic connected subset. Let $W\subset X$ be a
  $k$-dimensional rectifiable subset and $x_0\in X$. Then, there exist
  two positive constants $\kappa_1$ and $\kappa_2$ such that:
$$\kappa_1\underline\Theta^k(X,W,x_0)\leq
\underline\Theta^k(W,x_0)\leq \kappa_2\underline\Theta^k(X,W,x_0)$$
and
$$\kappa_1\overline{\Theta^k}(X,W,x_0)\leq
\overline\Theta^k(W,x_0)\leq \kappa_2\overline\Theta^k(X,W,x_0)\,.$$
\end{proposition}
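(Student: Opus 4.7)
The plan is to deduce the proposition from two independent comparisons: an inclusion between inner and outer balls at $x_0$, and an equality (up to constants) between the inner and outer Hausdorff $k$-measures on $W$. The trivial arc-versus-chord inequality $d_X \geq |\cdot-\cdot|$ supplies one direction of the ball comparison for free, while the opposite direction relies on the semialgebraic structure of $X$; the measure comparison uses only the rectifiability of $W$.

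For the ball comparison, the observation that $d_X(x_0,y)\geq |y-x_0|$ for every $y\in X$ immediately yields $\epsilon B_X(x_0)\subseteq \epsilon B(x_0)\cap X$ for every $\epsilon>0$. For the reverse I would invoke the semialgebraic fact that there exist constants $C>0$ and $\epsilon_0>0$ with $d_X(x_0,y)\leq C|y-x_0|$ for every $y\in X$ with $|y-x_0|<\epsilon_0$; equivalently, $\epsilon B(x_0)\cap X\subseteq C\epsilon B_X(x_0)$ for all small $\epsilon$. This pointwise inner-outer Lipschitz comparison is where the semialgebraicity of $X$ is essential; it can be extracted from Kurdyka's $L$-regular (pancake) decomposition of $X$ near $x_0$, from the semialgebraic local conic structure of the germ $(X,x_0)$, or directly via the curve selection lemma applied to the semialgebraic ratio $y\mapsto d_X(x_0,y)/|y-x_0|$ on $X\setminus\{x_0\}$. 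I expect establishing this comparison to be the main technical step of the proof.

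For the measure comparison, since $W$ is $k$-rectifiable, at $\mathcal H^k$-a.e.\ point $p\in W$ there is an approximate tangent $k$-plane over which $W$ is locally a Lipschitz graph; in particular, the intrinsic metric $d_W$ satisfies $d_W(y_1,y_2)/|y_1-y_2|\to 1$ as $y_1,y_2\to p$ along $W$. Since $|y_1-y_2|\leq d_X(y_1,y_2)\leq d_W(y_1,y_2)$ for all $y_1,y_2\in W$, the inner metric $d_X$ restricted to $W$ also agrees with the outer metric to first order at such $p$. The standard covering and differentiation argument for Hausdorff measure then gives $\mathcal H^k_X(S)=\mathcal H^k(S)$ for every Borel $S\subseteq W$.

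The rest is assembly. From $W\cap \epsilon B_X(x_0)\subseteq W\cap \epsilon B(x_0)$ and the Hausdorff identification,
$$\mathcal H^k_X(W\cap \epsilon B_X(x_0))=\mathcal H^k(W\cap \epsilon B_X(x_0))\leq \mathcal H^k(W\cap \epsilon B(x_0));$$
dividing by $\eta\epsilon^k$ and passing to $\liminf$ and $\limsup$ as $\epsilon\to 0^+$ yields $\underline\Theta^k(X,W,x_0)\leq \underline\Theta^k(W,x_0)$ and the analogue for $\overline\Theta^k$, so $\kappa_1=1$ works. From $W\cap \epsilon B(x_0)\subseteq W\cap C\epsilon B_X(x_0)$ and rewriting $\eta\epsilon^k=C^{-k}\eta(C\epsilon)^k$,
$$\frac{\mathcal H^k(W\cap \epsilon B(x_0))}{\eta\epsilon^k}\leq C^k\cdot \frac{\mathcal H^k_X(W\cap C\epsilon B_X(x_0))}{\eta(C\epsilon)^k},$$
and taking the analogous limits yields the opposite inequalities with $\kappa_2=C^k$.
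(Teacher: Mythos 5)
Your overall structure (a ball comparison at $x_0$ plus a Hausdorff-measure comparison on $W$) is sound, and the ball comparison itself is correct: the two-sided estimate $|y-x_0|\le d_X(x_0,y)\le C|y-x_0|$ near $x_0$ is exactly what Kurdyka's pancake decomposition (or local conic structure) supplies, and the assembly at the end is a clean bookkeeping argument. This matches the paper in spirit; the paper simply cites the Pancake Theorem \cite{Ku,BM} and observes that it makes the inner and outer versions of everything uniformly comparable.

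The gap is in the measure comparison, where you try to prove the stronger statement $\mathcal H_X^k(S)=\mathcal H^k(S)$ for Borel $S\subseteq W$ from rectifiability of $W$ alone. The argument as written has several problems. First, you invoke the intrinsic metric $d_W$ of $W$, but a $k$-rectifiable set need not be path connected or a length space, so $d_W$ may be $+\infty$ or undefined, and the sandwich $d_e\le d_X\le d_W$ then gives nothing. Second, the claim that at $\mathcal H^k$-a.e.\ point the ratio $d_W(y_1,y_2)/|y_1-y_2|\to 1$ as $y_1,y_2\to p$ does not follow from the existence of an approximate tangent plane; approximate tangency is a density statement and is compatible with nearby oscillation (think of a rectifiable curve of the form $y=x^2\sin(1/x)$ at the origin), so some extra input is needed even in the good cases. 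Third, and most importantly, the direction $\mathcal H_X^k\le\text{(const)}\,\mathcal H^k$ cannot be obtained from rectifiability of $W$ alone: rectifiability decomposes $W$ as $\bigl(\bigcup_j W_j\bigr)\cup N$ with $W_j$ inside $C^1$ manifolds and $\mathcal H^k(N)=0$, but since $d_e\le d_X$ gives only $\mathcal H^k\le\mathcal H_X^k$, you have no control over $\mathcal H_X^k(N)$ unless you already know that $d_X$ is locally bounded above by a constant times $d_e$ \emph{on $X$}. That control comes from the semialgebraicity of $X$ (pancake theorem), not from any property of $W$.

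The fix is immediate and is exactly the route the paper takes: apply the pancake decomposition $X=\bigcup_{i=1}^l X_i$ once and for all. On each $X_i$ (and near $x_0$) one has $d_e\le d_X\le d_{X_i}\le L\,d_e$, which simultaneously gives the ball comparison $\epsilon B_X(x_0)\subseteq\epsilon B(x_0)\cap X\subseteq L\epsilon B_X(x_0)$ and the measure comparison $\mathcal H^k(S)\le\mathcal H_X^k(S)\le L^k\mathcal H^k(S)$ for $S\subseteq W$ (summing over the finitely many pancakes). You do not need the sharp equality $\mathcal H_X^k=\mathcal H^k$; a two-sided bound with constants depending only on $X$ near $x_0$ suffices, and it does not require rectifiability of $W$ beyond what is needed to define $\mathcal H^k(W)$. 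Replacing your second step by this observation closes the gap and brings the proof into line with the paper's.
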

\begin{proof}
  If we used the outer metric instead of the inner metric in the
  definition of $\overline{\Theta^k}(X,W,x_0)$ and
  $\underline{\Theta^k}(X,W,x_0)$ we'd just get
  $\overline\Theta^k(W,x_0)$ and $\underline\Theta^k(W,x_0)$. Thus the
  proposition follows immediately from the Kurdyka's ``Pancake
  Theorem'' (\cite{Ku}, \cite{BM}) which says that if $X\subset\R^n$
  is a semialgebraic subset then there exists a finite semialgebraic
  partition $X=\bigcup_{i=1}^{l} X_i$ such that each $X_i$ is a
  semialgebraic connected set whose inner metric and outer (Euclidean)
  metric are bi-Lipschitz equivalent.
\end{proof}

The following Theorem shows that the germ of
an isolated complex singularity which has a separating set cannot be
\emph{metrically conical}, i.e., bi-Lipschitz homeomorphic to the
Euclidean metric cone on its link.
\begin{theorem}\label{th:cone}
  Let $(X, x_0)$ be a ($n+1$)-dimensional metric cone whose base is a
  compact connected Lipschitz manifold (possibly with
  boundary). Then, $X$ does not have a separating set at $x_0$.
\end{theorem}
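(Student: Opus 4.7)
The plan is to argue by contradiction, exploiting the cone's scaling symmetry to reduce the problem to a uniform estimate on a fixed compact ball, where a Cheeger-type isoperimetric inequality will yield the desired contradiction. Suppose $Y\subset X$ is a separating set at $x_0$ with components $A,B$ of positive inferior $(n+1)$-density. For each $\epsilon>0$ the metric cone admits a similarity $\sigma_\epsilon\colon X\to X$ of ratio $\epsilon$ fixing $x_0$ (in cone coordinates, $(y,r)\mapsto(y,\epsilon r)$). I will set $A_\epsilon:=\sigma_\epsilon^{-1}(A\cap \epsilon B(x_0))$, and analogously $B_\epsilon$ and $Y_\epsilon$; these will be subsets of the fixed compact set $M:=X\cap B(x_0,1)$. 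The positive inferior $(n+1)$-densities of $A,B$ give uniform lower bounds $\mathcal{H}^{n+1}(A_\epsilon),\mathcal{H}^{n+1}(B_\epsilon)\geq c>0$, while the vanishing $n$-density of $Y$ gives $\mathcal{H}^n(Y_\epsilon)=\epsilon^{-n}\mathcal{H}^n(Y\cap \epsilon B(x_0))\to 0$ as $\epsilon\to 0$. Since $\sigma_\epsilon$ is a homeomorphism, $Y_\epsilon$ still separates $A_\epsilon$ from $B_\epsilon$ inside $M$.

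The contradiction will then follow from a Cheeger-type isoperimetric inequality for $M$: there is a constant $h>0$ such that for every measurable $E\subset M$,
\[
\mathcal{H}^n\bigl(\partial E\cap \operatorname{int}(M)\bigr)\geq h\cdot \min\bigl(\mathcal{H}^{n+1}(E),\mathcal{H}^{n+1}(M\setminus E)\bigr).
\]
Taking $E_\epsilon$ to be the union of those connected components of $M\setminus Y_\epsilon$ meeting $A_\epsilon$, I will have $\partial E_\epsilon\cap\operatorname{int}(M)\subset Y_\epsilon$, $\mathcal{H}^{n+1}(E_\epsilon)\geq \mathcal{H}^{n+1}(A_\epsilon)\geq c$, and $\mathcal{H}^{n+1}(M\setminus E_\epsilon)\geq \mathcal{H}^{n+1}(B_\epsilon)\geq c$ (as $B_\epsilon$ lies in components of $M\setminus Y_\epsilon$ disjoint from $E_\epsilon$). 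The inequality will then give $\mathcal{H}^n(Y_\epsilon)\geq hc>0$ for all small $\epsilon$, contradicting $\mathcal{H}^n(Y_\epsilon)\to 0$.

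The main obstacle is justifying this Cheeger-type inequality for $M=C_1(L)$, a compact connected metric space that is Lipschitz everywhere except at the single conical point $x_0$. For compact connected Lipschitz manifolds (possibly with boundary) the Cheeger inequality is classical; the cone point, being a single point of zero $\mathcal{H}^n$-measure, will not affect it. A self-contained proof can be given by slicing: by the coarea formula applied to the distance function to $x_0$, one has $\mathcal{H}^n(\partial E\cap \operatorname{int}(M))\geq \int_0^1 \mathcal{H}^{n-1}(\partial E\cap L_r)\,dr$, and on each slice $L_r\cong rL$ the isoperimetric inequality of the compact connected Lipschitz manifold $L$ applies; combined with a Fubini-type estimate on the sliced volumes $\mathcal{H}^n(E\cap L_r)$ and $\mathcal{H}^n((M\setminus E)\cap L_r)$, this yields the required bound and completes the proof.
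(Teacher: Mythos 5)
Your high-level strategy -- rescale to the fixed unit cone, observe that $\mathcal{H}^n(Y_\epsilon)\to 0$ while $\mathcal{H}^{n+1}(A_\epsilon),\mathcal{H}^{n+1}(B_\epsilon)$ stay bounded below, then invoke an isoperimetric-type estimate -- is exactly the paper's strategy. The part that does not hold up is the justification of the Cheeger-type inequality on $M=X\cap B(x_0,1)$, and that is precisely where the paper takes more care.

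First, $M$ is a cone, and generically a cone over a compact Lipschitz $n$-manifold is \emph{not} a Lipschitz $(n+1)$-manifold at the vertex; so the statement that the Cheeger inequality for compact Lipschitz manifolds is ``classical, and the cone point will not affect it'' is not self-evident and would need an argument. The paper handles this cleanly by deleting a small ball $rD^k$ around the vertex whose volume is less than $\xi/2$; what is left, $X'\cong L\times[r,1]$, \emph{is} a compact connected Lipschitz manifold with boundary, the sets $A'_t,B'_t$ still have $(n+1)$-measure $>\xi/2$, and the needed ``small separating surface implies one dominant component'' statement is then proved directly (Lemma~\ref{lipschitz_manifold}) by covering $X'$ with finitely many Lipschitz balls and gluing.

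Second, the self-contained slicing argument you sketch has a concrete gap. The coarea inequality
$\mathcal{H}^n(\partial E\cap\operatorname{int}M)\ge\int_0^1\mathcal{H}^{n-1}(\partial E\cap L_r)\,dr$
only sees the part of $\partial E$ that is transverse to the spherical slices. If $\partial E$ is tangent to the slices the right-hand side collapses: for $E=C_1(L)\setminus C_{1/2}(L)$ one has $\partial E\cap L_r=\emptyset$ for $r\ne 1/2$, so the integral vanishes, yet $\mathcal{H}^n(\partial E)=\mathcal{H}^n(L_{1/2})>0$ and both $E$ and $M\setminus E$ have volume bounded below. So the sliced isoperimetric inequality on each $L_r$ cannot by itself yield the Cheeger constant; you would need an additional radial estimate controlling the variation of $r\mapsto\mathcal{H}^n(E\cap L_r)$ in terms of $\mathcal{H}^n(\partial E)$, and the ``Fubini-type estimate'' you allude to is not supplied. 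The paper's proof of Lemma~\ref{lipschitz_manifold} avoids slicing altogether (cover by bi-Lipschitz balls, isoperimetry in each ball, connectedness through overlaps), and thereby dodges this issue. To repair your write-up, either add the ball-removal step and prove the Lipschitz-manifold isoperimetric lemma by the covering argument, or supply the missing radial BV/coarea estimate that controls horizontal boundaries.
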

\begin{proof} Let $M$ be an $n$-dimensional compact connected
  Lipschitz manifold with boundary. For convenience of exposition we
  will suppose that $M$ is a subset of the Euclidean sphere
  $S^{k-1}\in\R^k$ centered at $0$ and with radius $1$ and $X$ the
  cone over $M$ with vertex at the origin $0\in\R^k$.  Suppose that
  $Y\subset X$ is a separating set, so $X\setminus Y=A\cup B$ with $A$
  and $B$ open in $X\setminus Y$; the $n$--density of $Y$ at $0$ is
  equal to zero and the inferior $(n+1)$--densities of $A$ and $B$ at
  $0$ are unequal to zero. In particular, there exists $\xi >0$ such
  that these inferior densities of $A$ and $B$ at $0$ are bigger than
  $\xi$. For each $t>0$, let $\rho_t\co X\cap tD^k\rightarrow X$ be
  the map $\rho_t(x)=\frac{1}{t}x$, where $tD^k$ is the ball about
  $0\in \R^k$ of radius $t$.  Denote $Y_t=\rho_t(Y\cap tD^k)$,
  $A_t=\rho_t(A\cap tD^k)$ and $B_t=\rho_t(B\cap tD^k)$. Since the
  $n$--density of $Y$ at $0$ is equal to zero, we have:
$$\lim_{t\to
    0^+}\mathcal{H}^{n}(Y_t)=0\,.$$
Also, since the inferior
  densities of $A$ and $B$ at $0$ are bigger than $\xi$, we have that
  $\mathcal{H}^{n+1}(A_t)>\xi$ and $\mathcal{H}^{n+1}(B_t)>\xi$ for all
  sufficiently small $t>0$.

  Let $r$ be a radius such that $X\cap rD^k$ has volume $\le\xi/2$ and
  denote by $X'$, $A'_t$, $B'_t$, $Y'_t$ the result of removing from
  each of 
  $X$, $A_t$, $B_t$, $Y_t$ the intersection with the interior of the
  ball $rB^k$. Then $X'$ is a Lipschitz $(n+1)$--manifold (with
  boundary), $A'_t$ and $B'_t$ subsets of $(n+1)$--measure $>\xi/2$
  separated by $Y_t$ of arbitrarily small $n$--measure.

The following lemma then gives the contradiction to
complete the proof.
\end{proof}
\begin{lemma}\label{lipschitz_manifold} Let $X'$ be a $(n+1)$-dimensional
  compact connected  Lipschitz manifold with boundary. Then, for
  any $\xi>0$ there exists $\epsilon>0$ such that if $Y'\subset X'$ is a
  $n$-dimensional rectifiable subset with
  $\mathcal{H}^{n}(Y')<\epsilon$, then $X'\setminus Y'$ has a connected
  component $A$ of $(n+1)$--measure exceeding
  $\mathcal{H}^{n+1}(X')-\xi/2$ (so any remaining components have total measure
  $<\xi/2$).
\end{lemma}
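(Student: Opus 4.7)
The plan is to argue by contradiction using a Cheeger-type isoperimetric inequality on $X'$. The central ingredient I would first establish is a constant $h=h(X')>0$ such that every Caccioppoli set $A\subset X'$ satisfies
$$P_{X'}(A)\ \ge\ h\,\min\bigl(\mathcal{H}^{n+1}(A),\,\mathcal{H}^{n+1}(X'\setminus A)\bigr),$$
where $P_{X'}(A)$ denotes the perimeter of $A$ measured inside the interior of $X'$ (so that portions of $\partial X'$ do not contribute). To obtain such an $h$ on a compact connected Lipschitz manifold with boundary, I would cover $X'$ by finitely many closed pieces each bi-Lipschitz to a Euclidean ball or half-ball, pull back the classical Euclidean isoperimetric inequality through each chart, and stitch the local estimates together using the connectedness of $X'$ to rule out ``balanced split'' counterexamples. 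This is the step I expect to be the main obstacle, since one must track mass distributed across several charts simultaneously; an alternative sidestep would be to invoke the fact that a compact Lipschitz manifold with boundary admits a bi-Lipschitz smoothing, after which the usual Cheeger inequality applies directly.

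Granted the isoperimetric inequality, the next observation is that any union $A$ of connected components of $X'\setminus Y'$ is open and its topological boundary inside $\operatorname{int}(X')$ is contained in $Y'$: such a boundary point lies in $X'\setminus A = Y'\cup(\text{other open components})$, and cannot lie in another open component, so it must lie in $Y'$. Hence $A$ has finite perimeter with
$$P_{X'}(A)\ \le\ \mathcal{H}^n(Y').$$

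Now suppose for contradiction that no component of $X'\setminus Y'$ has measure exceeding $\mathcal{H}^{n+1}(X')-\xi/2$. Order the components $C_1,C_2,\dots$ by decreasing measure and distinguish two cases: either some $C_i$ has measure in the interval $[\xi/2,\ \mathcal{H}^{n+1}(X')-\xi/2]$, in which case take $A=C_i$; or every component has measure less than $\xi/2$, in which case greedily merge components until the running total first lands in $[\xi/2,\xi)$ and call this union $A$. In either case, for $\xi$ small compared to $\mathcal{H}^{n+1}(X')$ (the only regime in which the lemma has content), one obtains $\min\bigl(\mathcal{H}^{n+1}(A),\,\mathcal{H}^{n+1}(X'\setminus A)\bigr)\ge \xi/2$. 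Combining with the perimeter bound and the isoperimetric inequality yields
$$\mathcal{H}^n(Y')\ \ge\ P_{X'}(A)\ \ge\ h\,\xi/2,$$
so setting $\varepsilon:=h\xi/2$ contradicts the hypothesis $\mathcal{H}^n(Y')<\varepsilon$ and completes the proof.
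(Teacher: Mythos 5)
Your strategy is sound in outline and rests on the same underlying ingredient as the paper (the Euclidean isoperimetric inequality applied locally on a finite cover by bi-Lipschitz balls), but you take a genuinely different route to the conclusion and you leave the main step as a gap. The paper never formulates or uses a global Cheeger inequality for $X'$. Instead, it fixes a finite cover $\{T_i\}$ of $X'$ by subsets bi-Lipschitz to balls chosen so that any two that meet do so in positive $(n+1)$--measure, applies the single-ball isoperimetric result in each $T_i$ (so that when $\mathcal{H}^n(Y')$ is small, the largest component $A_i$ of $T_i\setminus Y'$ has complement in $T_i$ of measure $<\xi/2m$), and then checks directly that the $A_i$ all meet one another and hence lie in a single component $A$ of $X'\setminus Y'$; the complement of $A$ is then contained in $\bigcup_i(T_i\setminus A_i)$ and has small measure. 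This sidesteps entirely the need to establish a Cheeger constant for $X'$ and to reason about balanced competitors.

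The gap you identify as your ``main obstacle'' is indeed a real one. Your first suggested fix (cover by charts, pull back the Euclidean inequality, and stitch the local estimates using connectedness) is essentially the paper's argument, but pushing it through to a clean inequality of the form $P_{X'}(A)\ge h\min(\mathcal{H}^{n+1}(A),\mathcal{H}^{n+1}(X'\setminus A))$ requires tracking how the mass of $A$ distributes across overlapping charts, which is precisely the bookkeeping the paper avoids by only tracking the \emph{largest} component per chart. Your second suggested fix, invoking a bi-Lipschitz smoothing of a compact Lipschitz manifold with boundary, is not reliable here: the relevant case in this paper is $n+1=4$ (real dimension of a complex surface), where existence of a bi-Lipschitz smooth structure on an abstract Lipschitz $4$--manifold is not available in the generality the lemma is stated. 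So while your contradiction scheme (greedy merging of components into a balanced competitor, then the Cheeger bound) is valid once the isoperimetric constant is in hand, as written the proposal outsources the decisive step, and the paper's more hands-on patching argument is both more elementary and more robust.
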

\begin{proof}
  If $X'$ is bi-Lipschitz homeomorphic to a ball then this follows
  from standard isoperimetric results: for a ball the isoperimetric
  problem is solved by spherical caps normal to the boundary (Burago
  and Maz'ja \cite{burago-mazja} p.\ 54, see also Hutchins
  \cite{hutchins}). The isoperimetric problem is often formulated in
  terms of currents, in which case one uses also that the mass of
  the current boundary of a region is less than or equal to the
  Hausdorff measure of the topological boundary (\cite{federer} 4.5.6
  or \cite{morgan} Section 12.2).

  Let $\{ T_i \}_{i=1}^{m}$ be a cover of $X'$ by subsets which are
  bi-Lipschitz homeomorphic to balls and such that
$$T_i\cap T_j\neq\emptyset ~\Rightarrow ~\mathcal{H}^{n+1}(T_i\cap
T_j)>0.$$ Without loss of generality we may assume
$$\xi/m<\min\{\mathcal{H}^{n+1}(T_i\cap T_j)~|~T_i\cap T_j\ne
\emptyset\}\,.$$

Since $T_i$ is bi-Lipschitz homeomorphic to a ball there
exists $\epsilon_i$ satisfying the conclusion of this lemma for
$\xi/m$.  Let $\epsilon=\min(\epsilon_1,\dots,\epsilon_m)$. So if $Y'\subset X'$ is an $n$--dimensional
rectifiable subset such that $\mathcal{H}^n(Y')<\epsilon$, then
for each $i$ the largest component $A_i$ of $T_i\setminus Y'$ has
complement $B_i$ of measure $<\xi/2m$.

We claim $\bigcup_{i=1}^mA_i$ is connected. It suffices to show that $$T_i\cap
T_j\ne \emptyset~\Rightarrow~A_i\cap A_j\ne\emptyset\,.$$ So suppose
$T_i\cap T_j\ne \emptyset$. Then $B_i\cup B_j$ has measure less than
$\xi$, which is less than $\mathcal H^n(T_i\cap T_j)$, so $T_i\cap
T_j\not\subset B_i\cup B_j$. This is equivalent to $A_i\cap A_j\ne \emptyset$.

Thus there exists a connected component $A$ of $X'\setminus Y'$ which
contains $\bigcup_{i=1}^mA_i$. Its complement $B$ is
a subset of
$\bigcup_{i=1}^m B_i$ and thus has measure less than $\xi/2$.
\end{proof}

\section{Separating sets in normal surface singularities}

\begin{theorem}\label{th:sep} Let $X\subset\C^3$ be a weighted homogeneous
  algebraic surface with respect to the weights $w_1\geq w_2>w_3$ and
  with an isolated singularity at $0$. If
  $\bigl(X\setminus\{0\}\bigr)\cap\{z=0\}$ is not connected, then $X$
  has a separating set at $0$.
\end{theorem}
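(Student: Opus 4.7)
The plan is to build an explicit separating set using the weighted $\mathbb C^*$-action $\lambda\cdot(x,y,z)=(\lambda^{w_1}x,\lambda^{w_2}y,\lambda^{w_3}z)$, which preserves both $X$ and $Z:=X\cap\{z=0\}$. Let $C_1,\dots,C_k$ ($k\ge2$) be the connected components of $Z\setminus\{0\}$; each is $\mathbb C^*$-invariant. With the weighted norm $N(x,y,z)=(|x|^{2/w_1}+|y|^{2/w_2}+|z|^{2/w_3})^{1/2}$, let $L:=X\cap\{N=1\}$, a compact real $3$-manifold meeting each $\mathbb R_{>0}$-orbit in exactly one point. Each $K_i:=C_i\cap L$ is a compact $S^1$-invariant finite union of circles, and the $K_i$ are pairwise disjoint, so for $\delta>0$ sufficiently small the connected components $V_1,\dots,V_k$ of $L\cap\{|z|<\delta\}$ containing $K_1,\dots,K_k$ respectively are pairwise disjoint. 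Define
\[
A:=\mathbb R_{>0}\cdot V_1,\qquad Y:=(\mathbb R_{>0}\cdot\partial V_1)\cup\{0\},
\]
with $\partial V_1=L\cap\{|z|=\delta\}\cap\overline{V_1}$, and let $B$ be the connected component of $X\setminus\overline{A}$ containing $\mathbb R_{>0}\cdot V_2$. Then $A,B$ are open in $X$, $Y$ is closed and real $3$-dimensional, and $Y$ separates $A$ from $B$ locally at $0$.

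The main technical step is showing $\overline\Theta^3(Y,0)=0$. Parametrize $Y\setminus\{0\}$ by $\phi(p,\lambda)=\lambda\cdot p$ on $\partial V_1\times(0,\infty)$. For $p\in\partial V_1$, $|z(p)|=\delta$, and since $w_3<w_2\le w_1$, for small $\lambda$ the Euclidean norm $|\lambda\cdot p|^2=\lambda^{2w_1}|x(p)|^2+\lambda^{2w_2}|y(p)|^2+\lambda^{2w_3}\delta^2$ is dominated by the last term, so $\lambda\cdot p\in B_\epsilon$ forces $\lambda\lesssim(\epsilon/\delta)^{1/w_3}$. The $3$-volume form on $Y$ is $|v_1\wedge v_2\wedge v_3|$ in $\Lambda^3\mathbb R^6$, where $v_1,v_2$ are the pushforwards of tangent vectors $T_1,T_2$ to $\partial V_1$ and $v_3$ is the radial pushforward. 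The crucial geometric observation is that any tangent vector $T$ to $\partial V_1\subset\{|z|=\delta\}$ satisfies $\operatorname{Re}(\overline{z(p)}\,T_3)=0$, so the $z$-component $T_3$ (as a real $2$-vector) is parallel to $i\,z(p)$. An explicit expansion of $v_1\wedge v_2\wedge v_3$ in the coordinate basis of $\mathbb R^6$ then shows that this parallelism kills the naive leading term at index-triples $(\operatorname{Re}z,\operatorname{Im}z,\cdot)$, and the surviving leading term (at indices involving both $z$-coordinates together with one $y$-coordinate) scales as $\lambda^{w_2+2w_3-1}\delta^2$. Integrating,
\[
\mathcal H^3(Y\cap B_\epsilon)=O\bigl(\delta^{-w_2/w_3}\,\epsilon^{2+w_2/w_3}\bigr),
\]
and since $w_2>w_3$ strictly, this is $o(\epsilon^3)$, as required.

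For fatness of $A$ and $B$: the complex surface $X$ has positive multiplicity at $0$, hence $\mathcal H^4(X\cap B_\epsilon)\ge c\,\epsilon^4$, and $\mathcal H^4(Y)=0$ since $\dim_\mathbb R Y=3$; so at least one of $A,B$ is fat. To see both are fat, let $A_i:=\mathbb R_{>0}\cdot V_i$. A parallel $4$-volume Jacobian computation (splitting $V_i=K_i\sqcup(V_i\setminus K_i)$ and noting that on $K_i$, where $z=0$, the dominant $\lambda$-range is $\lambda\lesssim\epsilon^{1/w_1}$) shows that each $A_i$ has positive $4$-density at $0$. Since $A=A_1$ and $B\supseteq A_2$, both are fat.

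The main obstacle is the Jacobian computation in the thinness step, and in particular identifying the cancellation coming from the real-parallelism of the $z$-components of tangent vectors to $\partial V_1$. This cancellation upgrades the leading scaling of the $3$-volume form from the naive $\lambda^{3w_3-1}$ to $\lambda^{w_2+2w_3-1}$, and it is precisely the strict inequality $w_2>w_3$ that converts this into an $o(\epsilon^3)$ bound; if one had $w_2=w_3$, the exponent $w_2/w_3+2$ would equal $3$ and the bound would fail. A secondary check is verifying that the surviving leading coefficient of the Jacobian is nonzero generically on $\partial V_1$; this follows because the $\mathbb C^*$-orbit direction $(iw_1x,iw_2y,iw_3z)$ is tangent to $\partial V_1$ (since $|z|$ is $S^1$-invariant) and, together with the other tangent directions, spans a nondegenerate portion of the required $y$-$z$ minor.
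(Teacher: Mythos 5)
Your construction parallels the paper's in its essential idea: both build the separating set as the $\mathbb R_{>0}$-cone (under the weighted action) over a compact semialgebraic $2$-cycle in the link that separates the branches of $X\cap\{z=0\}$ and is bounded away from $\{z=0\}$, and both exploit that the $z$-coordinate has the strictly smallest weight to get thinness. The concrete choice differs: you take the tube boundary $L\cap\{|z|=\delta\}\cap\overline{V_1}$, while the paper takes the conflict set of the two groups of branches inside the link; both are legitimate. The more substantive divergence is the \emph{method} for proving thinness and fatness. The paper observes that since $|z|\ge\delta$ on the link-level piece, every semialgebraic arc in $Y$ through $0$ has tangent vector in $\{x=y=0\}$, hence $\dim T_0Y\le 2<3$, and then invokes Kurdyka--Raby to get zero $3$-density; for fatness it builds explicit arcs showing $\dim_{\mathbb R}T_0A=\dim_{\mathbb R}T_0B=4$. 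You instead attempt a direct Jacobian estimate. This route can be made to work, but as written two things need repair. First, your ``cancellation'' narrative is off: a $3\times 3$ minor of the matrix $(v_1,v_2,v_3)$ automatically scales as $\lambda^{(\sum w_c)-1}$ over the chosen real coordinates $c$, and since at most two of the six real coordinates are $z$-coordinates, the exponent is never $3w_3-1$; the minimal exponent $w_2+2w_3-1$ already falls out without appealing to the parallelism of $T_3$ with $iz$. (That parallelism does kill one specific $2\times 2$ sub-minor, but it changes the coefficient, not the $\lambda$-power.) The conclusion $\mathcal H^3(Y\cap B_\epsilon)=O(\epsilon^{2+w_2/w_3})=o(\epsilon^3)$ is correct, but the reasoning offered for it is not. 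Second, the fatness step is genuinely incomplete: ``a parallel $4$-volume Jacobian computation'' and ``the dominant $\lambda$-range is $\lambda\lesssim\epsilon^{1/w_1}$'' are not an argument; at a minimum one must handle the case distinction $w_1=w_2$ versus $w_1>w_2$ as the paper does when exhibiting a $4$-dimensional family of tangent directions in $T_0A$, and similarly for $B$. You would save considerable effort by applying the paper's tangent-cone/Kurdyka--Raby shortcut to your own $Y$, $A$, $B$: since $|z|=\delta$ on $\partial V_1$, the paper's computation of $\lim_{t\to0^+}\gamma(t)/t$ shows $T_0Y\subset\{x=y=0\}$ directly, and the explicit-arc construction for $T_0A$, $T_0B$ applies verbatim, dispensing with the Jacobian bookkeeping entirely.
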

\begin{example}
  This theorem applies to the Brieskorn singularity $$X(p,q,r):=\{(x,y,z)\in
  \C^3~|~x^p+y^q+z^r=0\}$$
if $p\le q<r$ and $\operatorname{gcd}(p,q)>1$. In particular it is
not metrically conical. This was known for a different reason by
\cite{BFN1}: a weighted homogeneous surface singularity (not
necessarily hypersurface) whose two lowest weights are distinct is not
metrically conical.
\end{example}
\begin{proof}[Proof of Theorem \ref{th:sep}]
  Since $X$ is weighted homogeneous, the intersection $X\cap S^5$ is
  transverse and gives the singularity link.  By assumption, $
  \bigl(X\cap S^5\bigr)\cap\{ z=0\}$ is the disjoint union of two
  nonempty semialgebraic closed subsets $\widetilde{A}$ and
  $\widetilde{B}\subset \bigl(X\cap S^5\bigr)\cap\{ z=0\}$.  Let
  $\widetilde{M}$ be the conflict set of $\widetilde{A}$ and $\widetilde{B}$ in
  $X\cap S^5$, i.e.,
$$\widetilde M:=\{p\in X\cap S^5 ~|~ d(p,\widetilde A)=d(p,\widetilde B)\}\,,$$
where $d(\cdot,\cdot)$ is the standard metric on $ S^5$
(euclidean metric in $\C^3$ gives the same set).  Clearly, $\widetilde{M}$
is a compact semialgebraic subset and there exists $\delta>0$ such
that $d(\widetilde{M},\{ z=0\})>\delta$. Let $M=\C^*\widetilde M\cup\{0\}$
(the closure of the union of $\C^*$--orbits through $\widetilde{M}$). Note
that the $\C^*$--action restricts to a unitary action of $S^1$, so the
construction of $\widetilde M$ is invariant under the $S^1$--action, so
$M=\R^*\widetilde M$, and is therefore $3$--dimensional. It is
semi-algebraic by the Tarski-Seidenberg theorem.  We will use the
weighted homogeneous property of $M$ to show $\dim(T_0M)\leq 2$, where
$T_0M$ denotes the tangent cone of $M$ at $0$, from which will follow
that $M$ has zero $3$--density. In fact, we
will show that $T_0M\subset \{x=0,y=0\}$.

Let $T\colon \widetilde{M} \times [0,+\infty)\rightarrow
M$ be defined by:
$$T((x,y,z),t)=(t^{\frac{w_1}{w_3}}x,t^{\frac{w_2}{w_3}}y,tz).$$
Clearly, the restriction $T|_{ \widetilde{M} \times (0,+\infty)}\colon
\widetilde{M} \times (0,+\infty)\rightarrow M\setminus\{0\}$ is a
bijective semialgebraic map. Let $\gamma\colon [0,\epsilon)\rightarrow
M$ be a semianalytic arc; $\gamma(0)=0$ and $\gamma'(0)\neq 0$. We
consider $\phi(s)=T^{-1}(\gamma(s))$ for all $s\neq 0$.  Since $\phi$
is a semialgebraic map and $M$ is compact,
$\displaystyle\lim_{s\to 0}\phi(s)$ exists and belongs to $M\times
\{0\}.$ For the same reason, $\displaystyle\lim_{s\to
  0}\phi^{\prime}(s)$ also exists and is nonzero. Therefore, the arc
$\phi$ can be extended to $\phi\colon [0,\epsilon)\rightarrow
\widetilde{M} \times [0,+\infty)$ such that $\phi(0)\in \widetilde{M}\times
\{0\}$ and $\phi'(0)$ exists and is nonzero. We can take the
$[0,\infty)$ component of $\phi$ as parameter and write
$\phi(t)=((x(t),y(t),z(t),t)$.  Then $\gamma(t)=(t^{w_1/w_3}x(t),
t^{w_2/w_3}y(t), tz(t))$, so
\begin{eqnarray*}
  \lim_{t\to 0^+}\frac{\gamma(t)}{t} &=&
\left(\lim_{t\to 0}\frac{t^{\frac{w_1}{w_3}}}{t}x(t)\,,~
\lim_{t\to 0}\frac{t^{\frac{w_2}{w_3}}}{t}y(t)\,,~\lim_{t\to 0}z(t)\right) \\
  &=& (0,0,z(0))\,.
\end{eqnarray*}
This is a nonzero vector (note $|z(0)|>\delta$) in the set
$\{x=0,y=0\}$, so we obtain that $$T_0M\subset \{x=0,y=0\}.$$

Since $M$ is a $3$-dimensional semialgebraic set and $\dim(T_0M)\leq
2$, we obtain that the $3$-dimensional density of $M$ at $0$ is equal
to zero  (\cite{KR}).

Now, we have the following decomposition: $$X\setminus M=A\cup B\,,$$
where $\widetilde{A}\subset A$, $\widetilde{B}\subset B$, $A$ and $B$ are
$\C^*$--invariant and $A\cap B=\emptyset$. Since $A$ and $B$ are
semialgebraic sets, the $4$--densities $\operatorname{density}_4(A,0)$
and $\operatorname{density}_4(B,0)$ are defined. We will show that
these densities are nonzero. It is enough to prove that
$\dim_\R(T_0A)=4$ and $\dim_\R(T_0B)=4$. Let $\Gamma\subset A$ be a
connected component of $A\cap\{ z=0\}$. Note that
$\bar\Gamma=\Gamma\cup\{0\}$ is a complex algebraic curve.
We will show that $T_0A$ contains the set $\{(x,y,v)~|~ (x,y,0)\in
\bar\Gamma, v\in \C\}$ if $w_1=w_2$ (note that $\bar\Gamma$ is the line
through $(x,y,0)$ in this
case) or either the $y$--$z$ or the $x$--$z$ plane if $w_1<w_2$.

Given a smooth point $(x,y,0)\in \Gamma$ and $v\in \C$, we may choose
a smooth arc $\gamma\colon [0,\epsilon)\to A$ of the form
$\gamma(t)=(\gamma_1(t),\gamma_2(t), t^m\gamma_3(t))$ with
$(\gamma_1(0),\gamma_2(0))=(x,y)$ and $\gamma_3(0)=v$. Then, using the
$\R^*$--action, we transform this arc to the arc
$\phi(t)=t^j\gamma(t)$ with $j$ chosen so $jw_3+m=jw_2$. Now
$\phi(t)=(t^{jw_1}\gamma_1(t), t^{jw_2}\gamma_2(t),
t^{jw_2}\gamma_3(t))$ is a path in $A$ starting at the origin. Its
tangent vector $\rho$ at $t=0$,
$$\rho=\lim_{t\to 0+}\frac{\phi(t)}{t^{jw_2}}\,,$$
is $\rho=(x,y,v)$ if $w_1=w_2$ and $\rho=(0,y,v)$ if $w_1>w_2$. If
$w_1>w_2$ and $y=0$ then the same argument, but with $j$ chosen with
$jw_3+m=jw_1$, gives $\rho=(x,0,v)$. This proves our claim and
completes the proof that $T_0A$ has real dimension $4$. The proof for
$T_0B$ is the same.  
\end{proof}

\section{The Brian\c con-Speder example}

For each $t\in\C$, let $X_t=\{(x,y,z)\in\C^3 ~|~
x^5+z^{15}+y^7z+txy^6=0 \}$. This $X_t$ is weighted homogeneous
with respect to weights $(3,2,1)$ and has an isolated
singularity at $0\in\C^3$.

\begin{theorem}\label{bs_1} $X_t$ has a separating set at $0$ if $t\ne
  0$ but does not have a separating set at $0$ if $t=0$.
\end{theorem}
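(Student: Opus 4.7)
My plan splits according to whether $t\neq 0$ or $t=0$.

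For $t\neq 0$, I would apply Theorem \ref{th:sep} directly. The polynomial $x^5+z^{15}+y^7z+txy^6$ is weighted homogeneous of weighted degree $15$ with respect to the weights $(w_1,w_2,w_3)=(3,2,1)$, so the hypothesis $w_1\geq w_2>w_3$ holds. Setting $z=0$ gives $x^5+txy^6=x(x^4+ty^6)$; for $t\neq 0$ this is the union of the line $\{x=0\}$ and the curve $\{x^4+ty^6=0\}$, which are distinct irreducible curves meeting only at the origin. Hence $(X_t\setminus\{0\})\cap\{z=0\}$ has more than one connected component, and Theorem \ref{th:sep} produces a separating set.

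For $t=0$ the hypothesis of Theorem \ref{th:sep} fails: $(X_0\setminus\{0\})\cap\{z=0\}=\{x=z=0,\,y\neq 0\}$ is a single (punctured) line. This direction is the substantive content of the theorem, and my plan is to proceed as follows. The $\C^*$-action $\lambda\cdot(x,y,z)=(\lambda^3 x,\lambda^2 y,\lambda z)$ preserves $X_0$, and for any candidate semialgebraic separating set $Y$ I would first pass to the closure of its $\C^*$-saturation. By an argument analogous to the one at the end of the proof of Theorem \ref{th:sep}, this closure still has tangent cone of real dimension at most $3$ at $0$ and hence superior $3$-density zero, so it remains a candidate thin separator. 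It therefore suffices to rule out $\C^*$-invariant separators. Next, I would use the projection $\pi\colon X_0\to\C^2_{(y,z)}$, which exhibits $X_0$ as a $5$-sheeted branched cover ramified over $B=\{z(z^{14}+y^7)=0\}$. A $\C^*$-invariant separator descends to the link quotient $L_0=(X_0\cap S^5)/S^1$, which is a compact Riemann surface (with mild orbifold points). In $L_0$ the image of the ramification locus is connected, because the two components of $B$ meet at the origin and so their images in $L_0$ share points coming from the exceptional $S^1$-orbits over $0$. I would then deduce that no thin set in $L_0$ can separate the two alleged pieces $A,B$, by lifting bypass arcs in $L_0$ to paths in $X_0$.

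The main obstacle is precisely this last step: converting the topological statement ``the ramification locus of $\pi$ is connected at the origin'' into the quantitative metric assertion that no rectifiable $3$-dimensional subset of $X_0$ of superior density zero can disconnect a punctured neighborhood of $0$. I expect this to require an isoperimetric argument in the spirit of Lemma \ref{lipschitz_manifold}, combined with the Kurdyka Pancake Theorem (as in Proposition \ref{proposition1}) to pass freely between inner and outer measure estimates, applied on the Seifert-fibered link of $X_0$. The weighted inhomogeneity ($w_2\neq w_3$) means one cannot appeal to metric conicality as in Theorem \ref{th:cone}; instead one must construct explicit arcs between points of $A$ and $B$ whose inner length is controlled by their distance to $0$, bypassing $Y$ through the connected ramification locus.
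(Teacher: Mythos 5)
For $t\neq 0$ your argument coincides with the paper's: one factors $x^5+txy^6=x(x^4+ty^6)$ and invokes Theorem~\ref{th:sep}. The interesting case is $t=0$, and here your plan diverges from the paper and has genuine gaps.

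The paper's route for $t=0$ is quite different and sidesteps the obstacles you identify. It introduces the ``wedge''
$X^{\epsilon}=\{(x,y,z)\in X : \epsilon|y|\leq|z|\leq\frac{1}{\epsilon}|y|\}$
and proves (Lemma~\ref{lemma1}) that $X^{\epsilon}$ is \emph{metrically conical at $0$ with connected link}. The proof uses exactly the projection $P(x,y,z)=(y,z)$ you propose, noting it is a $5$-fold cyclic branched cover ramified over $\{z(z^{14}+y^7)=0\}$; but the crucial observation is that inside the sector $C^{\epsilon}=\{\epsilon|y|\leq|z|\leq|y|/\epsilon\}$ near $0$ there is \emph{no branching at all}, so $P$ restricts to an honest covering. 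One checks it is a connected $\Z/5$-cover by lifting one loop, and that the covering map is bi-Lipschitz by bounding the implicit-function derivatives $\partial x/\partial y$ and $\partial x/\partial z$ on $C^{\epsilon}$. The conicality is inherited from the standard conical structure of $C^{\epsilon}$. Then a volume estimate $\mathcal{H}^4(X\cap N^{\epsilon}\cap B(0,r))\le K\epsilon r^4$ (here $N^{\epsilon}$ is the complementary region) shows that any supposed separating triple $(A,Y,B)$ of $X$ gives, after intersecting with $X^{\epsilon}$, a separating set of $X^{\epsilon}$: the fat pieces stay fat because the shaved-off region has small volume, and $Y$ stays thin. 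This contradicts Theorem~\ref{th:cone}. No isoperimetric work in a Seifert-fibred link, no passage to a $\C^*$-quotient, and no lifting of bypass arcs is needed; the only isoperimetric input is already packaged in Theorem~\ref{th:cone} and Lemma~\ref{lipschitz_manifold}, which apply because $X^{\epsilon}$ \emph{is} a metric cone on a Lipschitz manifold.

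As for your plan, two steps do not work as stated. First, replacing an arbitrary separator $Y$ by the closure of its $\C^*$-saturation does not preserve thinness: $Y$ is $3$-real-dimensional, and the $\C^*$-orbit of a generic $3$-dimensional set in the $4$-real-dimensional $X_0$ will be $4$-dimensional, hence not thin. In the proof of Theorem~\ref{th:sep} the separator $M$ is $\R^*$-invariant by construction (it is the cone over a conflict set in the link), which is why the tangent-cone argument there applies; that argument does not show that saturating an arbitrary thin set remains thin. Second, you yourself flag the key missing step: turning ``the ramification locus is connected at $0$'' into ``no rectifiable set of density zero can disconnect a punctured neighborhood.'' That implication is false in general (connected branch locus is compatible with existence of a separating set), so it must be replaced by a concrete metric estimate; the paper provides exactly that via the $X^{\epsilon}$/metric-cone reduction. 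Without some version of that reduction, your argument does not close.
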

\begin{proof}
  As already noted, for $t\ne 0$ Theorem \ref{th:sep} applies, so
  $X_t$ has a separating set.  So from now on we take $t=0$. Denote
  $X:=X_0$.  In the following, for each sufficiently small
  $\epsilon>0$, we use the notation $$X^{\epsilon}=\{ (x,y,z)\in X ~|~
  \epsilon |y|\leq |z| \leq \frac{1}{\epsilon}|y| \}.$$ We need a
  lemma.
\begin{lemma}\label{lemma1} $X^{\epsilon}$ is
  metrically conical at the origin with connected link.
\end{lemma}
\begin{proof}
Note that the lemma makes a statement about the germ of $X^\epsilon$
at the origin. We will restrict to the part of $X^\epsilon$ that
lies in a suitable closed neighborhood of the origin.

Let $P\co \C^3\rightarrow\C^2$ be the orthogonal projection
$P(x,y,z)=(y,z)$. The restriction $P_X$ of $P$ to $X$ is a $5$-fold
cyclic branched covering map branched along
$\{(y,z)~|~z^{15}+y^7z=0\}$. This is the union of the $y$--axis in
$\C^2$ and the seven curves $y=\zeta z^2$ for $\zeta$ a $7$--th root
of unity. These seven curves are tangent to the $z$--axis.

Let $$C^{\epsilon}=\{ (y,z)\in \C^2 ~|~ \epsilon |y|\leq |z| \leq
\frac{1}{\epsilon}|y| \} .$$ Notice that no part of the branch locus
of $P_X$ with $|z|<\epsilon$ is in $C^{\epsilon}$. In particular, if
$D$ is a disk in $\C^2$ of radius $<\epsilon$ around $0$, then the map
$P_X$ restricted to $X^\epsilon$ has no branching over this disk. We
choose the radius of $D$ to be $\epsilon/2$ and denote by $Y$ the part
of part of $X^\epsilon$ whose image lies inside this disk.
Then $Y$ is a covering of $C^\epsilon\cap D$, and to complete the
proof of the lemma we must show it is a connected covering space and
that the covering map is bi-Lipschitz.

Since it is a Galois covering with group $\Z/5$, to show it is a
connected cover it suffices to show that there is a closed curve in
$C^\epsilon\cap D$ which does not lift to a closed curve in
$Y$. Choose a small constant $c\le \epsilon/4$ and consider the curve
$\gamma\colon[0,1]\to C^\epsilon\cap D$ given by $\gamma(t)=(ce^{2\pi
  it}, c)$. A lift to $Y$ has $x$--coordinate $(c^{15}+c^8e^{14\pi i
  t})^{1/5}$, which starts close to $c^{8/5}$ (at $t=0$) and ends close
to $c^{8/5}e^{(14/5)\pi i}$ (at $t=1$), so it is not a closed curve.

To show that the covering map is bi-Lipschitz, we note that locally
$Y$ is the graph of the implicit function $(y,z)\mapsto x$ given by
the equation $x^5+z^{15}+y^7z=0$, so it suffices to show that the
derivatives of this implicit function are bounded. Implicit
differentiation gives
$$\frac{\partial x}{\partial  y}
=-\frac{7y^6z}{5x^4}\,,\quad\frac{\partial x}{\partial
  z}=-\frac{15z^{14}+y^7}{5x^4}\,.$$
 It is easy to see that there exists
$\lambda >0$ such that
$$|15z^{14}+ y^7|\leq \lambda|z|^4, \
|y^7|\leq\lambda |z^{14}+y^{7}| \ \mbox{and} \
|15z^{14}+y^7|\leq\lambda |z^{14}+y^7|,$$
for all $(y,z)\in
C^{\epsilon}\cap D.$ We then get
$$\left|\frac{\partial x}{\partial
    y}\right|^5=\frac{7^5|y^{30}z^5|}{5^5|z^{14}+y^7|^4|z|^4}\le\frac{7^5}{5^5}\lambda^4|y^2z|<\frac{7^5\lambda^4\epsilon^3}{5^52^3}$$
and
$$\left|\frac{\partial x}{\partial
    z}\right|^5=\frac{|15z^{14}+y^7|^5}{5^5|z^{14}+y^7|^4|z|^4}\le \frac{\lambda^5}{5^5}\,,$$
completing the proof.
\end{proof}

We now complete the proof of Theorem \ref{bs_1}. Let us suppose that
$X$ has a separating set. Let $A,B,Y\subset X$ be subsets satisfying:
\begin{itemize}
\item for some small $\epsilon>0$ the subset $[\epsilon B(x_0)\cap
  X]\setminus Y$ is the union of relatively open subsets  $A$ and $B$,
\item the $3$-dimensional density of $Y$ at $0$
 is equal to zero,
\item the $4$-dimensional inferior densities of $A$ and $B$ at $0$
 are unequal to zero.
\end{itemize}
Set $$N^{\epsilon}=\{(x,y,z)\in\C^3 ~|~ |z|\leq\epsilon |y| \ \mbox{or}
\ |y|\leq\epsilon |z| \}.$$ For each subset $H\subset\C^3$ we
denote $$H^{\epsilon}=H\cap [\C^3\setminus N^{\epsilon}].$$

In this step, it is valuable to observe that there exists a positive constant
$K$ (independent of $\epsilon$) such that
\begin{equation}\label{volume_estimative}
\mathcal{H}^4(X\cap N^{\epsilon}\cap B(0,r)) \leq K  \epsilon r^4
\end{equation}
for all $0< r\leq 1$ (see, e.g., Comte-Yomdin \cite{CY}, chapter
5). By definition, the $4$-dimensional inferior density of $A$ at $0$
is equal to
$$\liminf_{r\to 0^+}\left(\frac{\mathcal{H}^4(A^{\epsilon}\cap
    B(0,r))}{r^4}
+\frac{\mathcal{H}^4(A\cap N^{\epsilon}\cap B(0,r))}{r^4}\right)$$
Then, if $\epsilon>0$ is sufficiently small, we can use inequality
\eqref{volume_estimative} in order to show that the $4$-dimensional
inferior density of $A^{\epsilon}$ is a positive number. In a similar
way, we can show that if $\epsilon>0$ is sufficiently small, then the
$4$-dimensional inferior density of $B^{\epsilon}$ at $0$ is a
positive number. These facts are enough to conclude that
$Y^{\epsilon}$ is a separating set of $X^{\epsilon}$.  But in view of
 Lemma \ref{lemma1} this contradicts Theorem \ref{th:cone}.
\end{proof}

\section{Metric Tangent Cone and separating sets}

Given a closed and connected semialgebraic subset $X\subset\R^m$
equipped with the inner metric $d_X$, for any point $x\in X$, we
denote by $\mathcal{T}_xX$ the metric tangent cone of $X$ at $x$; see
Bernig and Lytchak \cite{BL}. Recall that  the metric tangent cone of
a metric space $X$ at a point $x\in X$ is defined as the
Gromov-Hausdorff limit
$$\mathcal{T}_xX=\lim_{t\to 0+}(B(x,t),\frac{1}{t}d_X)$$ 
where $\frac{1}{t}d_X$ is the distance on $X$ divided by
$t$. Bernig and Lytchak show that for a semialgebraic set the metric
tangent cone exists and is semialgebraic.  Moreover, a semialgebraic
bi-Lipschitz homeomorphism of germs induces a bi-Lipschitz equivalence
of their metric tangent cones (with the same Lipschitz
constant).

Recall that a connected semialgebraic set $X\subset\R^m$ is called
\emph{normally embedded} if the inner $d_X$ and the outer $d_e$
metrics on $X$ are bi-Lipschitz equivalent.

\begin{theorem}[\cite{BM}] Let $X\subset\R^m$ be a connected
  semialgebraic set. Then there exist a normally embedded
  semialgebraic set $\widetilde{X}\subset\R^q$ and a semialgebraic
  homeomorphism $p\colon\widetilde{X}\rightarrow X$ which is bi-Lipschitz
  with respect to the inner metric. $\widetilde{X}$, or more precisely the
  pair $(\widetilde{X},p)$, is called a normal embedding of $X$.
\end{theorem}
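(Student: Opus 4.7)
The plan is to build $\widetilde X$ by augmenting the natural embedding $X\subset\R^m$ with extra real coordinates that record inner-metric distance information, so that the outer geometry of the enlarged embedding remembers the inner geometry of $X$. The starting point is Kurdyka's Pancake Theorem (the same result invoked in Proposition \ref{proposition1}): it yields a finite semialgebraic partition $X=\bigsqcup_{i=1}^l X_i$ such that on each closed piece $\overline{X_i}$ the inner metric $d_X$ restricted to $\overline{X_i}$ is bi-Lipschitz equivalent to the ambient Euclidean metric, with a uniform constant $C$.

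Next, for each $i$, define $f_i\co X\to\R$ by $f_i(x):=d_X(x,\overline{X_i})$. Semialgebraicity of the inner distance function on a semialgebraic set (a standard consequence of Tarski--Seidenberg applied to a pancake triangulation, reducing admissible paths to piecewise linear ones) makes each $f_i$ semialgebraic, and each is manifestly $1$-Lipschitz for $d_X$. Now set
$$\Phi\co X\to\R^{m+l},\qquad \Phi(x):=\bigl(x,f_1(x),\dots,f_l(x)\bigr),$$
let $\widetilde X:=\Phi(X)\subset\R^{m+l}$, and let $p\co\widetilde X\to X$ be projection onto the first $m$ coordinates, which is the inverse of $\Phi$ and evidently a semialgebraic homeomorphism.

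That $\Phi$ is bi-Lipschitz from $(X,d_X)$ to $(\widetilde X,d_{\widetilde X})$ is the easy direction: $\Phi$ is Lipschitz for $d_X$ because each of its $m+l$ coordinate functions is $1$-Lipschitz (the identity factor since outer distance is bounded by $d_X$, the $f_i$ by construction), and $p$ is Lipschitz for $d_{\widetilde X}$ because the linear projection $\R^{m+l}\to\R^m$ is $1$-Lipschitz in the outer metric and outer distance bounds inner distance in any metric space.

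The substantive point, and the main obstacle, is normal embeddedness of $\widetilde X$. Given $\tilde x=\Phi(x)$ and $\tilde y=\Phi(y)$ at outer distance $D=|\tilde x-\tilde y|$, choose any pancake $X_i$ containing $x$. Then $f_i(x)=0$, so $f_i(y)\le D$, whence there exists $y'\in\overline{X_i}$ with $d_X(y,y')\le D$; since $|y-y'|\le d_X(y,y')$, this gives $|x-y'|\le|x-y|+|y-y'|\le 2D$. The pancake equivalence on $\overline{X_i}$ now yields $d_X(x,y')\le d_{\overline{X_i}}(x,y')\le 2CD$, and the triangle inequality produces $d_X(x,y)\le (2C+1)D$. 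Pushing this forward via the already-established Lipschitz property of $\Phi$ gives $d_{\widetilde X}(\tilde x,\tilde y)\le KD$ for a universal constant $K$, which is exactly normal embeddedness. The delicate part hidden in this argument is the extraction of a single uniform constant $C$ that works on all closed pancakes simultaneously and persists through the closures of the strata; this is precisely the technical content of Kurdyka's theorem that we are invoking, and once it is in hand the construction above is essentially formal.
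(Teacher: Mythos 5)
The paper states this result as a citation to Birbrair--Mostowski~\cite{BM} and supplies no proof of its own, so there is nothing in the paper itself to compare against. Your argument is correct and is in the same spirit as the proof in~\cite{BM}: both build the normal embedding by augmenting the given embedding with auxiliary semialgebraic Lipschitz functions controlled by Kurdyka's pancake decomposition, and the estimate you give (pick the pancake through $x$, find a nearby point $y'$ of that pancake close to $y$, and use the pancake's inner--outer equivalence to bridge $x$ to $y'$) is exactly the kind of bound that makes the construction work. Two small points worth flagging. First, the semialgebraicity of the inner distance function $d_X$ (hence of each $f_i$) is itself a nontrivial theorem, not a formal consequence of Tarski--Seidenberg; you should cite it rather than wave at it. Second, when you write $d_X(x,y')=f_i(y)\le D$ you are implicitly assuming the infimum defining $f_i(y)$ is attained on the closed set $\overline{X_i}$; this is harmless (work with an $\varepsilon$-approximate minimizer and let $\varepsilon\to 0$, or restrict attention to compact $X$ as is the relevant case in this paper), but it should be said. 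With those two remarks incorporated, the proof is complete.
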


The following result relates the metric tangent cone of $X$ at $x$ and
the usual tangent cone of the normally embedded set.

\begin{theorem}[{\cite[Section 5]{BL}}]\label{bernig_lytchak} Let
  $X\subset\R^m$ be a closed and connected semialgebraic set and $x\in
  X$.  If $(\widetilde{X},p)$ is a normal embedding of $X$, then
  $T_{p^{-1}(x)}X$ is bi-Lipschitz homeomorphic to the metric tangent
  cone $\mathcal{T}_xX$.
\end{theorem}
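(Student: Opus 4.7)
The plan is to use the normal embedding property of $\widetilde X$ to reduce the Gromov-Hausdorff metric tangent cone to the classical Euclidean tangent cone, which for a semialgebraic set is a familiar Hausdorff limit of rescaled sets in Euclidean space. Note that in the statement $T_{p^{-1}(x)}X$ should be read as the classical tangent cone of $\widetilde X$ at $p^{-1}(x)$, since the notation is otherwise ill-defined.

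First I would dispense with the map $p$. Since $p\co\widetilde X\to X$ is a bi-Lipschitz homeomorphism for the inner metric, it induces a bi-Lipschitz equivalence $\mathcal T_{p^{-1}(x)}\widetilde X\to\mathcal T_xX$ with the same constant, by the functoriality of the Gromov-Hausdorff limit under bi-Lipschitz maps noted just before the theorem. Thus it suffices to prove that, for the normally embedded semialgebraic set $\widetilde X\subset\R^q$ at the point $\tilde x:=p^{-1}(x)$, the metric tangent cone $\mathcal T_{\tilde x}\widetilde X$ is bi-Lipschitz homeomorphic to the classical outer tangent cone $T_{\tilde x}\widetilde X$. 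Normal embedding furnishes a constant $C\ge 1$ with $d_e\le d_{\widetilde X}\le C\,d_e$, where $d_e$ denotes the restricted Euclidean metric. Rescaling by $1/t$ then gives two families of pointed metric spaces, $(B_{\widetilde X}(\tilde x,t),\tfrac1t d_{\widetilde X})$ and $(\widetilde X\cap B_e(\tilde x,t),\tfrac1t d_e)$, that are uniformly $C$-bi-Lipschitz equivalent via the identity map (modulo a harmless $C$-factor adjustment of the ball radii that disappears in the limit). Passing to the Gromov-Hausdorff limit as $t\to 0^+$ shows $\mathcal T_{\tilde x}\widetilde X$ is $C$-bi-Lipschitz homeomorphic to the outer-metric limit $\lim_{t\to 0^+}\bigl(\tfrac1t(\widetilde X-\tilde x)\cap B_e(0,1),\,d_e\bigr)$.

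It then remains to identify this outer-metric limit with the classical tangent cone $T_{\tilde x}\widetilde X$. For this I would use the standard semialgebraic fact that $\tfrac1t(\widetilde X-\tilde x)$ converges, in the Hausdorff metric on compact subsets of $\R^q$, to $T_{\tilde x}\widetilde X$ as $t\to 0^+$; this is a consequence of the curve selection lemma and the local conical triviality of semialgebraic germs. Hausdorff convergence of subsets of a fixed Euclidean space, each equipped with the restricted Euclidean metric, implies Gromov-Hausdorff convergence of the intrinsic metric spaces, because the Hausdorff-close inclusions themselves furnish approximating correspondences of vanishing distortion. The main obstacle is the bookkeeping in this last step: one must keep basepoints, scales, and Hausdorff error uniform so that the bi-Lipschitz constant in the final statement genuinely inherits from $C$ and does not degenerate as $t\to 0^+$. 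Once this uniform control is in place, composing the two equivalences yields the theorem.
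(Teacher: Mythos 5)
The paper does not actually prove this theorem; it cites it to Bernig--Lytchak~\cite{BL}, Section~5, without an argument, so there is no internal proof to compare against. Your reconstruction is correct and follows the route the original authors take: first reduce, via the bi-Lipschitz invariance of Gromov--Hausdorff tangent cones under inner-metric bi-Lipschitz homeomorphisms (stated in the paper just before the theorem), to the normally embedded $\widetilde X$; then use the normal embedding inequality $d_e\le d_{\widetilde X}\le C\,d_e$ to replace the inner metric by the restricted Euclidean metric uniformly across all scales; and finally identify the outer-metric Gromov--Hausdorff limit with the classical semialgebraic tangent cone $T_{\tilde x}\widetilde X$ via Hausdorff convergence of the rescaled germs, which---inside a fixed $\R^q$, with the restricted Euclidean metric, and after clipping to a fixed compact ball---passes to Gromov--Hausdorff convergence with distortion controlled by the Hausdorff error. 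You are also right to read $T_{p^{-1}(x)}X$ as a misprint for $T_{p^{-1}(x)}\widetilde X$. The one place to be explicit, which you flag as ``bookkeeping,'' is that the Hausdorff convergence $\frac1t(\widetilde X-\tilde x)\to T_{\tilde x}\widetilde X$ must indeed be taken within a fixed compact window (so one is comparing compact sets and the identity correspondence gives an honest $\epsilon$-approximation), and that the discrepancy between inner and outer balls, being bounded by the factor $C$, merely rescales the radius and hence is immaterial in the limit since both sides are cones. With those points spelled out, the argument is complete and matches the approach in \cite{BL}.
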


\begin{theorem}\label{bernig_lytchak_2}If $(X_1,x_1)$ and $(X_2,x_2)$
  are germs of semialgebraic sets which are semialgebraically
  bi-Lipschitz homeomorphic with respect to the induced outer metric,
  then their tangent cones $T_{x_1}X_1$ and $T_{x_2}X_2$ are
  semialgebraically bi-Lipschitz homeomorphic.
\end{theorem}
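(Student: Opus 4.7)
The plan is to build the tangent map $DF\colon T_{x_1}X_1\to T_{x_2}X_2$ directly as a bi-Lipschitz semialgebraic homeomorphism, by extracting it from the tangent cone of the graph of $F$. Write $F\colon (X_1,x_1)\to(X_2,x_2)$ for the hypothesized semialgebraic bi-Lipschitz homeomorphism, with outer-metric constants $L_1|x-y|\le|F(x)-F(y)|\le L_2|x-y|$, and let $G\subset X_1\times X_2$ be its graph, which is semialgebraic. The key object is the usual tangent cone $T_{(x_1,x_2)}G\subset T_{x_1}X_1\times T_{x_2}X_2$, which is semialgebraic because it is the tangent cone of a semialgebraic set.

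Next I would show that $T_{(x_1,x_2)}G$ is the graph of a well-defined map $DF\co T_{x_1}X_1\to T_{x_2}X_2$. The key input is the curve selection lemma: every nonzero $u\in T_{x_1}X_1$ can be represented by a semialgebraic arc $\gamma\co[0,\epsilon)\to X_1$ with $\gamma(0)=x_1$ and $\gamma(t)=x_1+tu+o(t)$. Then $F\circ\gamma$ is a semialgebraic arc in $X_2$ starting at $x_2$, so Puiseux expansion plus the Lipschitz bound $|F(\gamma(t))-x_2|\le L_2|\gamma(t)-x_1|$ forces the limit $w:=\lim_{t\to0^+}\frac{F(\gamma(t))-x_2}{t}$ to exist. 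Setting $DF(u):=w$ (and $DF(0):=0$), the independence of the choice of $\gamma$ follows because two arcs $\gamma_1,\gamma_2$ representing the same $u$ satisfy $|\gamma_1(t)-\gamma_2(t)|=o(t)$, hence by the Lipschitz property $|F(\gamma_1(t))-F(\gamma_2(t))|=o(t)$, so they yield the same $w$. Conversely, every element of $T_{(x_1,x_2)}G$ arises from a semialgebraic arc in $G$ by curve selection, so the projection $T_{(x_1,x_2)}G\to T_{x_1}X_1$ is bijective and $T_{(x_1,x_2)}G=\operatorname{graph}(DF)$. This automatically gives semialgebraicity of $DF$.

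For the bi-Lipschitz estimate I would take two tangent vectors $u,u'\in T_{x_1}X_1$ represented by semialgebraic arcs $\gamma,\gamma'$ with the same parameter, apply the Lipschitz inequalities pointwise,
\begin{equation*}
L_1\,\frac{|\gamma(t)-\gamma'(t)|}{t}\le\frac{|F(\gamma(t))-F(\gamma'(t))|}{t}\le L_2\,\frac{|\gamma(t)-\gamma'(t)|}{t},
\end{equation*}
and take $t\to 0^+$ to obtain $L_1|u-u'|\le|DF(u)-DF(u')|\le L_2|u-u'|$. Running the same construction on the semialgebraic bi-Lipschitz inverse $F^{-1}$ produces a map $D(F^{-1})$, and since the two constructions agree on arcs ($F\circ F^{-1}=\mathrm{id}$ gives $DF\circ D(F^{-1})=\mathrm{id}$ arc-by-arc), $DF$ is a homeomorphism.

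The main obstacle is the well-definedness step, where one must verify that the limit defining $DF(u)$ depends only on $u$ and not on the arc chosen: this is where both the semialgebraicity (needed for Puiseux/curve selection) and the outer bi-Lipschitz assumption (needed to transfer the $o(t)$ estimate from $X_1$ to $X_2$) are essential. Once this is in hand, everything else reduces to taking Lipschitz inequalities along arcs and passing to the limit.
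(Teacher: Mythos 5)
Your proof is correct but takes a genuinely different route from the paper. The paper disposes of this theorem entirely by citation: the full semialgebraic statement is referred to \cite{BFN_normal}, and the weaker non-semialgebraic version is deduced from Bernig--Lytchak \cite{BL} by observing that the usual tangent cone coincides with the Gromov--Hausdorff metric tangent cone for the outer metric, so bi-Lipschitz invariance of metric tangent cones applies. You instead build the induced map $DF$ directly, as the map whose graph is $T_{(x_1,x_2)}G$ where $G$ is the graph of $F$, and verify well-definedness and the bi-Lipschitz bounds along semialgebraic arcs. This is self-contained and more elementary, avoids Gromov--Hausdorff limits altogether, and yields the stronger conclusion that the induced homeomorphism has the same Lipschitz constants $L_1,L_2$ as $F$; semialgebraicity comes for free since $T_{(x_1,x_2)}G$ is the tangent cone of a semialgebraic set. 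The paper's route is shorter and fits the metric-tangent-cone theme of the section. Two small points worth spelling out in a careful write-up: (1) the assertion that every $u\in T_{x_1}X_1$ is realized by a semialgebraic arc with $\lim_{t\to 0^+}(\gamma(t)-x_1)/t=u$ is a little more than bare curve selection --- one applies curve selection to the semialgebraic set $\{(t,v): t>0,\ x_1+tv\in X_1\}$ near $(0,u)$ and then reparametrizes by the $t$--coordinate via the monotonicity theorem; and (2) the existence of $\lim_{t\to 0^+}\bigl(F(\gamma(t))-x_2\bigr)/t$ is most cleanly seen from the fact that a bounded semialgebraic function of one real variable has a limit at $0$, so the explicit Puiseux expansion is not really needed.
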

\begin{proof} This is proved in \cite{BFN_normal}.  Without the
  conclusion that the bi-Lipschitz homeomorphism of tangent cones is
  semi-algebraic it is immediate from Bernig and Lytchak \cite{BL},
  since, as they point out, the usual tangent cone (which they call
  the subanalytic tangent cone) is the metric cone with respect to the
  outer (Euclidean) metric.
\end{proof}
Recall that a partition $\{X_i\}_1^k$ of $X$ is called an
\emph{L-stratification} if each $X_i$ is a Lipschitz manifold and for
each $X_i$ and for each pair of points $x_1,x_2\in X_i$ there exist
two neighborhoods $U_1$ and $U_2$ and a bi-Lipschitz homeomorphisms
$h\colon U_1\rightarrow U_2$ such that for each $X_j$ one has
$h(X_j\cap U_1)=X_j\cap U_2$. An L-stratification is called
\emph{canonical} if any other L-stratification can be obtained as a
refinement of this one. In \cite{BB} it is proved, by a slight
modification of Parusinski's Lipschitz stratification \cite{Par}, that
any semialgebraic set admits a canonical semialgebraic
L-stratification. The collection of $k$-dimensional strata of the
canonical L-stratification of $X$ is called the \emph{k-dimensional
  L-locus of $X$}. By Theorem \ref{bernig_lytchak}, the metric tangent
cone of a semialgebraic set admits a canonical L-stratification.

Let $M\subset\R^n$ be a semialgebraic subset of the unit 
sphere centered at the origin $0\in\R^n$. Let $C(M)$ be the straight cone
over $M$ with the vertex at the origin $0\in\R^n$. We say that a subset is
a \emph{separating subcone} of $C(M)$ if:
\begin{itemize}
\item it is a straight cone over a closed subset $N\subset M$ with 
  vertex at the origin $0\in\R^n$;
\item $M\setminus N$ is not connected.
\end{itemize}

\begin{example} 
  Consider the Brieskorn singularity defined by:
  $$X(a_1,\dots,a_n):=\{(z_1,\dots,z_n)\in \C^n ~|~ z_1^{a_1}+\dots+z_n^{a_n}=0\\,,$$ 
  with $a_1=a_2=a\ge 2$ and $a_k>a$ for $k>2$. The tangent cone at the
  origin is the union of the $a$ complex hyperplanes $\{z_1=\xi z_2\}$
  with $\xi$ an $a$-th root of $-1$. These intersect along the
  $(n-2)$--plane $V=\{z_1=z_2=0\}$. Thus, $V$ is a separating subcone
  of the tangent cone $T_0X(a_1,\dots,a_n)$. The following theorem
  shows that $V$ is the tangent cone of a separating set in
  $X(a_1,\dots,a_n)$ (a special case of this is again the $A_k$
  surface singularity for $k>1$).
\end{example}

\begin{theorem}\label{th:septc} Let $X$ be an $n$--dimensional
  closed semialgebraic set and let $x_0\in X$ be a point such that the
  link of $X$ at $x_0$ is connected and the $n$--density of $X$ at
  $x_0$ is positive. Any semialgebraic separating subcone of
  codimension $\geq 2$ in the tangent cone $T_{x_0}X$ contains the
  tangent cone of a separating set of $X$ at $x_0$.
\end{theorem}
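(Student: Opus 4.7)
The plan is to adapt the conflict-set construction from the proof of Theorem \ref{th:sep}. Write the given separating subcone as $N = C(N')$ where $N' \subset M := T_{x_0}X \cap S^{m-1}$ is semialgebraic and closed, and choose any splitting $M \setminus N' = A' \sqcup B'$ into nonempty relatively open subsets; automatically $\overline{A'} \cap \overline{B'} \subset N'$ because $A' \cap N' = \emptyset = B' \cap N'$. Let $\sigma(x) := (x - x_0)/|x-x_0|$ denote the angular projection. For sufficiently small $r > 0$, define $A$ (respectively $B$) to be the set of $x \in X\cap B(x_0,r)\setminus\{x_0\}$ for which $d(\sigma(x),\overline{A'}) < d(\sigma(x),\overline{B'})$ (respectively $>$), and set $Y := \{x_0\} \cup \bigl((X\cap B(x_0,r))\setminus(A\cup B)\bigr)$. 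By Tarski--Seidenberg all three are semialgebraic; $A$ and $B$ are disjoint, relatively open in $X\setminus Y$, and their union is $(X\cap B(x_0,r))\setminus Y$.

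To show $Y$ has vanishing $(n-1)$-density I would prove $T_{x_0}Y \subset N$. If $x_k \in Y\setminus\{x_0\}$ with $(x_k - x_0)/t_k \to v \neq 0$ for some $t_k \searrow 0$, then $\sigma(x_k) \to \hat v := v/|v| \in M$ and continuity of distance gives $d(\hat v,\overline{A'}) = d(\hat v,\overline{B'})$. Since $A'\cap\overline{B'} = \emptyset = B'\cap\overline{A'}$, this equality rules out $\hat v\in A'\cup B'$, so $\hat v\in N'$ and $v\in N$. Thus $\dim T_{x_0}Y \leq \dim N \leq n-2$, and the Kurdyka--Raby theorem \cite{KR} gives vanishing of the $(n-1)$-density of $Y$ at $x_0$. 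For positive $n$-density of $A$ (the argument for $B$ is the same) I would show $T_{x_0}A \supset C(A')$: given $\hat v\in A'$, choose $x_k \in X$ with $\sigma(x_k) \to \hat v$ and $|x_k - x_0| \to 0$; then $d(\sigma(x_k),\overline{A'})\to 0 < d(\hat v,\overline{B'})$, so $x_k\in A$ eventually and $\hat v\in T_{x_0}A$. Since $A'$ is open in the $(n-1)$-dimensional link $M$, we obtain $\dim T_{x_0}A = n$, and \cite{KR} then yields strictly positive $n$-density of $A$. A finite semialgebraic decomposition into connected components together with the additivity of density on disjoint semialgebraic sets selects a connected component of $A$ (and of $B$) carrying positive density; these are the two required components of $(X\cap B(x_0,r))\setminus Y$, and the inclusion $T_{x_0}Y \subset N$ established above is the containment the theorem claims.

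The main obstacle I anticipate is the single-component density extraction: the sets $A$ and $B$ as defined may split into several connected components, and one must ensure that the positive density inherited from $C(A')$ and $C(B')$ is concentrated in at least one component of each. This is handled by the fact that for semialgebraic germs the inferior and superior $n$-densities coincide and behave additively on finite disjoint semialgebraic unions, so some component must absorb a positive share. A secondary technical point is the choice of $r$: one needs $r$ small enough that $X\cap B(x_0,r)$ is a genuine germ neighborhood and that the semialgebraic function $d(\sigma(\cdot),\overline{A'}) - d(\sigma(\cdot),\overline{B'})$ exhibits no spurious behavior, both of which hold once $r$ lies below the first critical value of this function.
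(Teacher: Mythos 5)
Your conflict-set construction is a legitimate alternative to the paper's approach, which instead works with $\alpha$-horn neighborhoods $U_W^{c,\alpha}=\{x:d_e(x,W)<c\|x\|^\alpha\}$ of the pieces $\overline A$, $\overline B$ of the tangent cone (with $\alpha>1$ chosen from the rate at which $X$ approaches $T_{x_0}X$), sets $Z=\overline{U_A^{c,\alpha}}\cap\overline{U_B^{c,\alpha}}$, $Y'=X\cap Z$, and then takes the topological boundary $\partial Y'$ as the separating set. Your tangent-cone inclusions $T_{x_0}Y\subset N$ and $T_{x_0}A\supset C(A')$, and the finite-components/additivity argument at the end, are all correct.

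However, there is a genuine gap at the step where you invoke Kurdyka--Raby to conclude the $(n-1)$-density of $Y$ vanishes. Kurdyka--Raby controls the $d$-density of a $d$-dimensional semialgebraic set via the $d$-density of its tangent cone, so your inclusion $T_{x_0}Y\subset N$ gives vanishing $(n-1)$-density only after you know $\dim Y\le n-1$. But $Y$ is defined as $g^{-1}(0)\cup\{x_0\}$ where $g(x)=d(\sigma(x),\overline{A'})-d(\sigma(x),\overline{B'})$, and the zero set of a continuous (non-identically-zero) semialgebraic function can have nonempty interior, so $Y$ could a priori be $n$-dimensional; in that case $\mathcal H^{n-1}(Y\cap\epsilon B(x_0))=\infty$ and the $(n-1)$-density is not zero. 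Your argument $T_{x_0}Y\subset N$ only bounds the dimension of the tangent cone, which does not bound $\dim Y$ (a semialgebraic set can have a tangent cone of strictly lower dimension). The paper sidesteps this by passing to the topological boundary $\partial Y'$, which is automatically of codimension $\ge 1$ (and exactly $(n-1)$-dimensional for generic $c$), and then applying Kurdyka--Raby to $\partial Y'$. Your proof is easily repaired in the same spirit: take the separating set to be $\partial A$ (or $\partial A\cup\partial B$), which is automatically $(n-1)$-dimensional, is contained in $Y$ so inherits $T_{x_0}(\partial A)\subset N$, and still separates $A$ from $B$ since both are open and disjoint.
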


\begin{proof} As usual we can suppose that the point $x_0$ is the
  origin. Recall $rB(0)$ means the ball of radius $r$ about
  $0$. Observe that the function 
  $$f(r)=d_{\rm Hausdorff}(T_0X\cap rB(0),X\cap rB(0))$$
  is semialgebraic, continuous and $f(0)=0$. By the 
  definition of the tangent cone one has
  $f(r)=ar^{\alpha}+o(r^{\alpha})$ for some $a>0$ and
  $\alpha>1$.

For a semialgebraic set $W\subset\R^N$ with $0\in W$, let
$U_W^{c,\alpha}$ be the $\alpha$-horn like neighborhood of $W$,
defined by:
  $$U_W^{c,\alpha}=\{x\in \R^N ~|~ d_e(x,W)< c\| x\|^{\alpha}\}.$$ 
  For some $c>0$ and sufficiently small $r>0$ one has $X\cap
  rB(0)\subset U_{T_0X}^{c,\alpha}\cap rB(0)$. We fix this $r$
  and replace $X$ by $X\cap rB(0)$, so $X\subset U_{T_0X}^{c,\alpha}$

  Let $Y\subset T_0X$ be a semialgebraic separating subcone of
  codimension $\geq 2$. We may assume $Y$ is closed. We then have a
  partition $$T_0X=A\cup Y\cup B\,,$$ where $A$ and $B$ are disjoint
  open subsets of $T_0X$ of positive $n$--density. We can assume $\bar
  A\cap \bar B=Y$ (if not, replace $A$ by $A\cup (Y\setminus (\bar
  A\cap\bar B))$). Then $U_{T_0X}^{c,\alpha}=U_A^{c,\alpha}\cup
  U_B^{c,\alpha}$, so $X\subset U_A^{c,\alpha}\cup
  U_B^{c,\alpha}$. Let
  $Z=\overline{U_A^{c,\alpha}}\cap\overline{U_B^{c,\alpha}}$ and
  $Y'=X\cap Z$. Then $X\setminus Y'$ is the disjoint union of the open
  sets $A':=(\overline{U_A^{c,\alpha}}\cap X)\setminus Z$ and
  $B':=(\overline{U_B^{c,\alpha}}\cap X)\setminus Z$.

  Now $T_0U_A^{c,\alpha}=\overline{A}$ and
  $T_0U_B^{c,\alpha}=\overline{B}$, so $T_0Z\subset
  \overline{A}\cap\overline{B}=Y$ so $T_0Y'\subset Y$. It follows that
  $T_0(A')=\overline A$ and $T_0(B')=\overline B$, so $A'$ and $B'$
  have positive $n$--density. Thus $\partial Y'$ separates $X$ into
  open sets of which at least two have positive $n$-density. Moreover
  $\partial Y'$ is an $(n-1)$--dimensional semialgebraic set (if $c$ is
  chosen generically) and, since its tangent cone has dimension $\le (n-2)$,
  its $(n-1)$--density is zero (\cite{KR}). So $\partial Y'$ is a
  separating set.
\end{proof}
\begin{remark} The Brian\c con-Speder example $X_t$ presented in Section 4
  has tangent cone equal to the $yz$--plane, which is nonsingular
 and thus does not have separating subcone in codimension $2$, but $X_t$
 nevertheless has a semialgebraic separating set at $0$ if $t\ne 0$.
\end{remark}

\begin{proposition} Let $X$ be a n-dimensional closed semialgebraic
  set and let $x\in X$ be a point such that the link of $X$ at $x$ is
  connected and the n-density of $X$ at $x$ is positive. If $X$ is
  normally embedded and has a semialgebraic separating set at $x$,
  then the tangent cone $T_xX$ contains a semialgebraic separating
  subcone of codimension $\geq 2$.
\end{proposition}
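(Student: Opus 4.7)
The plan is to take $T_xY$ itself as the desired separating subcone. Write $T_xX$ as the cone $C(M)$ over its link $M$, and let $M_A$, $M_B$, $N$ be the corresponding links of $T_xA$, $T_xB$, $T_xY$ inside $M$. Because the tangent-cone construction commutes with finite unions, $M=M_A\cup M_B\cup N$. I need to verify three things: (a) $\dim N\leq n-3$, so that $C(N)=T_xY$ has codimension at least $2$ in $T_xX$; (b) $M_A\cap M_B\subset N$; and (c) $M\setminus N$ is disconnected.

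Part (a) is immediate from \cite{KR}: the semialgebraic set $Y$ has vanishing $(n-1)$-density at $x$, hence $\dim T_xY\leq n-2$, i.e.\ $\dim N\leq n-3$; similarly, the positive $n$-density of $A$ and $B$ forces $T_xA$ and $T_xB$ to be genuine $n$-dimensional cones, so $\dim M_A=\dim M_B=n-1$. Granted (b), part (c) follows almost formally. Indeed $M\setminus N\subset M_A\cup M_B$, so $M\setminus N=(M_A\setminus N)\cup(M_B\setminus N)$; these two pieces are closed in $M\setminus N$ (as $M_A$ and $M_B$ are closed in $M$), disjoint (by (b)), and each has dimension $n-1>\dim N$, hence is nonempty. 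This exhibits $M\setminus N$ as a disjoint union of two nonempty clopen subsets.

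The heart of the argument, and the unique place where the normal-embedding hypothesis is used, is (b). Suppose $v\neq 0$ lies in $T_xA\cap T_xB$, and pick sequences $a_k\in A$, $b_k\in B$ and positive scalars $t_k\to 0^+$ with $(a_k-x)/t_k\to v$ and $(b_k-x)/t_k\to v$. Then the outer distance satisfies $|a_k-b_k|=o(t_k)$, so by the normal embedding $d_X(a_k,b_k)=O(|a_k-b_k|)=o(t_k)$. Any inner rectifiable arc of length $o(t_k)$ in $X$ from $a_k$ to $b_k$ must cross $Y$, because $A$ and $B$ lie in distinct components of $X\setminus Y$; pick such a crossing point $y_k\in Y$. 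Then $|y_k-a_k|\leq d_X(y_k,a_k)=o(t_k)$, which forces $(y_k-x)/t_k\to v$, and hence $v\in T_xY$. Without normal embedding this step would collapse: the inner distance could be far larger than the outer one, no short arc in $X$ would connect $a_k$ and $b_k$, and there would be no reason for a corresponding point of $Y$ to carry the same tangent direction. This is precisely the obstruction illustrated by the Brian\c con--Speder example, whose tangent cone is smooth despite the existence of a separating set.
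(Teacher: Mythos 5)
Your proof is correct and follows essentially the same route as the paper's: both identify $T_xY$ (equivalently its link $S_xY$) as the separating subcone, use the density hypothesis on $Y$ and \cite{KR} to get codimension $\ge 2$, and invoke the normal-embedding hypothesis to show that any common tangent direction of $A$ and $B$ must already be a tangent direction of $Y$. The only cosmetic differences are that you argue with sequences and a short connecting arc where the paper uses semialgebraic arcs $\gamma_1,\gamma_2$ parametrized by distance from $x$ and compares $d_X(\gamma_1(t),\gamma_2(t))$ with $d_X(\gamma_1(t),Y)$, and that you state $M_A\cap M_B\subset N$ directly rather than framing it as a contradiction to connectedness.
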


\begin{proof} Suppose that $X$ has a separating set $Y\subset X$ at
  $x$. Let $A,B\subset X$ such that
\begin{enumerate}
\item[a.] $A\cap B =\{x\}$;
\item[b.] $X\setminus Y=A\setminus\{x\}\cup B\setminus\{x\}$;
\item[c.] the $n$-densities of $A$ and $B$ at $x$ are positive.
\end{enumerate}

Recall the following notation: $$S_xZ=\{v\in T_xZ ~|~ |v|=1\}.$$ So
$C(S_xZ)=T_xZ$. Since the $(n-1)$-density of $Y$ at $x$ is equal to
zero, $S_xY$ has codimension at least two in
$S_xX$. Let us show that $S_xX\setminus S_xY$ is not connected. If
$S_xX\setminus S_xY$ were connected, then $(S_xA\setminus S_xY)\cap
(S_xB\setminus S_xY)\neq\emptyset$. Let $v\in (S_xA\setminus S_xY)\cap
(S_xB\setminus S_xY)$. Since $v\in S_xA$ and $v\in S_xB$, there exist
two semialgebraic arcs $\gamma_1\colon [0,r)\rightarrow A$ and
$\gamma_2\colon [0,r)\rightarrow B$ such that $$|\gamma_i(t)-x|=t ~
\mbox{and} ~ |\gamma_i(t)-x|=t ~ \forall ~ t\in [0,r)$$
and $$\lim_{t\to 0^+}\frac{\gamma_1(t)-x}{t}=v=\lim_{t\to
  0^+}\frac{\gamma_2(t)-x}{t}.$$ Since $\gamma_1(t)$ and $\gamma_2(t)$
belong to different components of $X\setminus Y$, any arc in $X$
connecting $\gamma_1(t)$ to $\gamma_2(t)$ meets $Y$. That is why
$$d_X(\gamma_1(t),\gamma_2(t))\geq d_X(\gamma_1(t),Y).$$ 
Since $X$ is normally embedded, we conclude that
$$\lim_{t\to 0^+}\frac{d_e(\gamma_1(t),Y)}{t}=0.$$ Thus, $v\in S_xY$.

Finally, the $n$-densities of $T_xA\setminus T_xY$ and $T_xB\setminus
T_xY$ are positive (e.g., by \cite[Proposition 1.2]{BL}), so the proof
is complete.
\end{proof}

\begin{theorem}\label{th:nsc} Let $X$ be a closed semialgebraic set
  and let $x\in X$ be a point such that the link of $X$ at $x$ is
  connected. Then $X$  has a semialgebraic separating set at $x$
  if, and only if, the metric tangent cone $\mathcal{T}_xX$ is
  separated by an $L$-locus of codimension $\geq 2$.
\end{theorem}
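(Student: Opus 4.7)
The plan is to use a normal embedding $(\widetilde X,p)$ of $X$ in the sense of \cite{BM} to reduce the statement to its geometric counterpart, where Theorem \ref{th:septc} and the proposition preceding it apply directly. Write $\tilde x:=p^{-1}(x)$. Since $p$ is bi-Lipschitz for the inner metric, the Lipschitz invariance proposition in Section \ref{sec:separating sets} shows that $X$ has a separating set at $x$ iff $\widetilde X$ has one at $\tilde x$. By Theorem \ref{bernig_lytchak}, $\mathcal{T}_xX$ is bi-Lipschitz equivalent (via a semialgebraic map) to the usual tangent cone $T_{\tilde x}\widetilde X$, and the canonical L-stratification is a bi-Lipschitz invariant (the pullback of a canonical L-stratification is an L-stratification, and minimality makes it canonical). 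Thus the condition ``$\mathcal{T}_xX$ is separated by an L-locus of codim $\ge 2$'' translates to the same statement for $T_{\tilde x}\widetilde X$. It therefore suffices to prove: for normally embedded $\widetilde X$, there is a semialgebraic separating set at $\tilde x$ if and only if $T_{\tilde x}\widetilde X$ is separated by an L-locus of codim $\ge 2$.

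For the direction $(\Rightarrow)$, apply the preceding proposition to produce a semialgebraic separating subcone $Y\subset T_{\tilde x}\widetilde X$ of codim $\ge 2$, with $T_{\tilde x}\widetilde X\setminus Y = A\sqcup B$ and $A,B$ of positive $n$-density. At a smooth point $q$ of $Y$, a neighborhood of $q$ in $T_{\tilde x}\widetilde X$ is bi-Lipschitz to a product of a Euclidean factor with a transverse cone whose link is disconnected (the two sheets approaching $Y$ from $A$ and $B$); this local bi-Lipschitz type is incompatible with that of a smooth interior point of $A$ or $B$, so $q$ cannot lie in a top-dimensional L-stratum. Hence the smooth part of $Y$ sits in L-strata of codim $\ge 2$. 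Taking $L$ to be the union of all L-strata of codim $\ge 2$ meeting $Y$, one obtains $T_{\tilde x}\widetilde X\setminus L\subset A\cup B$, with $A\setminus L$ and $B\setminus L$ both nonempty because $A$ and $B$ contain top-dimensional smooth points. Thus $L$ is a separating L-locus of codim $\ge 2$.

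For $(\Leftarrow)$, let $L\subset T_{\tilde x}\widetilde X$ be an L-locus of codim $\ge 2$ that separates. The scaling $v\mapsto\lambda v$ is a semialgebraic bi-Lipschitz self-map of $T_{\tilde x}\widetilde X$ for each $\lambda>0$; by the canonical property it permutes the L-strata, and by connectedness of $(0,\infty)$ each stratum is individually invariant, so $L$ is a cone. Its closure $\overline L$ is a closed semialgebraic subcone of codim $\ge 2$ with empty interior, so $T_{\tilde x}\widetilde X\setminus\overline L\subset T_{\tilde x}\widetilde X\setminus L$ still splits into at least the same connected components, and $\overline L$ is therefore a separating subcone of $T_{\tilde x}\widetilde X$ in the sense of the definition preceding Theorem \ref{th:septc}. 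Applying that theorem yields a separating set for $\widetilde X$ at $\tilde x$, which via $p$ gives one for $X$ at $x$, completing the proof.

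The main obstacle lies in the forward direction: verifying that the separating subcone is contained in a union of L-strata of codim $\ge 2$. This ultimately rests on the fact that the canonical L-stratification distinguishes points of differing local bi-Lipschitz type, and that no point of a separating subcone can lie in the top-dimensional (``Euclidean-type'') stratum. The backward direction's key ingredient---scaling invariance of the canonical L-stratification of the tangent cone---follows routinely from the canonical property and the bi-Lipschitz nature of scaling.
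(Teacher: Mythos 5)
Your proposal follows the same route as the paper's (one-line) proof: reduce by a normal embedding $(\widetilde X,p)$, apply the Lipschitz invariance of separating sets and Theorem~\ref{bernig_lytchak} (together with Theorem~\ref{bernig_lytchak_2} for semialgebraicity), then feed the problem into Theorem~\ref{th:septc} for the backward direction and the proposition preceding Theorem~\ref{th:nsc} for the forward direction. The paper states none of this explicitly---its proof is literally a citation---so you are supplying the missing content, in particular the translation between ``separating subcone of codimension $\ge 2$'' and ``separating $L$-locus of codimension $\ge 2$.'' The backward direction (scale-invariance of the canonical $L$-strata, so the codim-$\ge 2$ $L$-locus closes up to a separating subcone) is sound, modulo observing that the scaling maps induce a homomorphism from $(\R_{>0},\cdot)$ to the finite permutation group of strata, which must be trivial.

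There is a genuine gap in the forward direction. After producing a separating subcone $Y$ with $T_{\tilde x}\widetilde X\setminus Y=A\sqcup B$, you assert that at every smooth point $q$ of $Y$ ``the two sheets approaching $Y$ from $A$ and $B$'' force $q$ into a stratum of codimension $\ge 2$. This is only true for $q\in\overline A\cap\overline B$. A point of $Y$ lying in $\overline A$ but not in $\overline B$ need not be metrically singular at all and could sit in a top-dimensional $L$-stratum; your conclusion ``$T_{\tilde x}\widetilde X\setminus L\subset A\cup B$'' therefore does not follow as written, since it requires $Y\subset L$. The fix is standard and short: first shrink $Y$ to $Y'=\overline A\cap\overline B$ (still a closed semialgebraic subcone of codimension $\ge 2$ whose complement $(T_{\tilde x}\widetilde X\setminus\overline B)\sqcup(T_{\tilde x}\widetilde X\setminus\overline A)$ is disconnected), so that every point of $Y'$ is approached by both sides; alternatively, argue directly that a path from $A$ to $B$ avoiding the codim-$\ge 2$ $L$-locus would meet $Y$ only at top-dimensional stratum points, where removing a codimension-$\ge 2$ set cannot disconnect a Euclidean chart, so the path could not switch components of $T_{\tilde x}\widetilde X\setminus Y$. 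With either repair the argument is complete.
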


\begin{proof} This follows directly from Proposition 5.5 and Proposition 5.7.
\end{proof}


\begin{thebibliography}{BDM}

\bibitem{BB} L. Birbrair and J-P. Brasselet. Metric
    homology. \textit{Comm. Pure and Appl. Math.} \textbf{LIII} (2000) ,
    1434--1447.

\bibitem{BF} Lev Birbrair and Alexandre Fernandes, Inner metric geometry of
  complex algebraic surfaces with isolated singularities, Comm. Pure
  Appl. Math. {\bf 61} (2008),n. 11, 1483--1494.

\bibitem{BFN1} Lev Birbrair, Alexandre Fernandes and Walter
  D. Neumann, Bi-Lipschitz geometry of weighted homogeneous surface
  singularities, Math. Ann. {\bf342} (2008), 139--144.

\bibitem{BFN2} Lev Birbrair, Alexandre Fernandes and Walter
  D. Neumann, Bi-Lipschitz geometry of complex surface
  singularities. Geom. Dedicata {\bf 139} (2009), 259--267.

\bibitem{BFN_normal} Lev Birbrair, Alexandre Fernandes and Walter
  D. Neumann, On normal embedding of
complex algebraic surfaces, arXiv:0901.0030

\bibitem{BFN_rat} Lev Birbrair, Alexandre Fernandes and Walter
  D. Neumann, Separating sets in complex surface singularities, in preparation.

\bibitem{BL} Andreas Bernig, Alexander Lytchak, Tangent spaces and Gromov-Hausdorff limits of subanalytic spaces, J. Reine Angew. Math. {\bf 608} (2007), 1--15.

\bibitem{BM} Lev Birbrair, Tadeusz Mostowski, Normal embeddings of semialgebraic sets.  Michigan Math. J.  {\bf47}  (2000),   125--132.

\bibitem{BS} Jo\"el Brian\c con,  Jean-Paul  Speder, La trivialit\'e
  topologique n'implique pas les conditions de Whitney.
  C. R. Acad. Sci. Paris S\'er. A-B {\bf280} (1975), Aii, A365--A367.

\bibitem{burago-mazja}Yu. D. Burago and V. G. Maz'ya, Certain Questions of
Potential Theory and Function Theory for Regions with Irregular
Boundaries. (Russian) Zap. Nau\v cn. Sem. Leningrad. Otdel. Mat. Inst.
Steklov. (LOMI) {\bf3} 1967; English translation: Potential Theory
and Function Theory for Irregular Regions.
Seminars in Mathematics, V. A. Steklov Mathematical Institute,
Leningrad, Vol. 3 Consultants Bureau, New York 1969.

\bibitem{federer} Herbert Federer, \emph{Geometric measure
    theory}. Die Grundlehren der mathematischen Wissenschaften, Band
  {\bf153} (Springer-Verlag New York Inc., New York 1969)

\bibitem{F} Alexandre Fernandes, Topological equivalence of
    complex curves and bi-Lipschitz maps. Michigan Math. J. {\bf 51} (2003)
  pp 593--606.

\bibitem{hutchins} Michael Hutchings, The structure of area-minimizing
  double bubbles, J. Geom. Anal. {\bf7} (1997),  285--304.

\bibitem{Ku} Krzysztof Kurdyka, On a subanalytic stratification
  satisfying a Whitney property with exponent $1$, in \emph{Real Algebraic
  Geometry (Rennes, 1991)}, Lecture Notes in Mathematics {\bf 1524}
  (Springer-Verlag Berlin 1992) 316--322.

\bibitem{KR} Krzysztof Kurdyka and Gilles Raby, Densit\'e des
  ensembles sous-analytiques, Ann. Inst. Fourier, {\bf39} (1989),
  753--771.

\bibitem{morgan} Frank Morgan, \emph{Geometric measure theory. A beginner's
    guide}.  Academic Press, Inc., San Diego, CA, 2000.

\bibitem{Par} Adam Parusi\'nski, Lipschitz properties of semi-analytic
  sets.  Ann. Inst. Fourier (Grenoble) {\bf38} (1988), 189--213.

\bibitem{TP} F. Pham and B. Teissier, Fractions Lipschitziennes d'une
  algebre analytique complexe et saturation de Zariski,
  Pr\'epublications Ecole Polytechnique No. M17.0669 (1969).


\bibitem{CY} Yosef Yomdin and G. Comte, Tame geometry with application in
   smooth analysis, Lecture Notes in Mathematics {\bf 1834}
   (Springer-Verlag Berlin 2004).

\end{thebibliography}
\end{document}